\newtheorem{theo}{Theorem}[section]
\newtheorem{lemma}[theo]{Lemma}
\newtheorem{prop}[theo]{Proposition}
\newtheorem{conj}[theo]{Conjecture}
\newtheorem{cor}[theo]{Corollary}
\newtheorem{remark}[theo]{Remark}
\numberwithin{equation}{section}
\mathchardef\mhyphen="2D
\def\A{{\mathbb A}}
\def\L{\mathbb{L}}
\def\C{\mathbb{C}}
\def\Z{\mathbb{Z}}
\def\Q{\mathbb{Q}}
\def\PP{{\mathbb P}}
\def\FF{{\mathbb F}}
\def\pre-tr{\operatorname{pre-tr}}
\def\Hom{\operatorname{Hom}}
\def\End{\operatorname{End}}
\newcommand{\xto}{\xrightarrow}
\newcommand{\onto}{\twoheadrightarrow}
\newcommand{\Coh}{\operatorname{Coh}}
\newcommand{\Proj}{\operatorname{Proj}}
\newcommand{\Km}{\operatorname{Km}}
\newcommand{\cO}{{\mathcal O}}
\newcommand{\cA}{{\mathcal A}}
\newcommand{\cB}{{\mathcal B}}
\newcommand{\cC}{{\mathcal C}}
\newcommand{\mh}{\mathfrak h}
\newcommand{\im}{\operatorname{Im}}
\newcommand{\codim}{\operatorname{codim}}
\newcommand{\rk}{\operatorname{rk}}
\newcommand{\Br}{\operatorname{Br}}
\newcommand{\add}{\operatorname{add}}
\newcommand{\Aut}{\operatorname{Aut}}
\newcommand{\ord}{\operatorname{ord}}
\newcommand{\tors}{\operatorname{tors}}
\newcommand{\pr}{\operatorname{pr}}
\newcommand{\Var}{\operatorname{Var}}
\newcommand{\HS}{\operatorname{HS}}
\newcommand{\Hdg}{\operatorname{Hdg}}
\newcommand{\one}{\mathbf{1}}
\title[Some remarks on L-equivalence of algebraic varieties]
{Some remarks on L-equivalence of algebraic varieties}
\author{Alexander I. Efimov}
\address{Steklov Mathematical Institute of RAS, Gubkin str. 8, Moscow 119991, Russia}
\email{efimov@mccme.ru}
\thanks{MSC: 18F30, 14C30}
\thanks{This work is supported by the RSF under a grant 14-50-00005.}
\begin{document}

\begin{abstract}In this short note we study the questions of (non-)L-equivalence of algebraic varieties, in particular, for abelian varieties and K3 surfaces. We disprove the original version of a conjecture of Huybrechts \cite[Conjecture 0.3]{H} stating that isogenous K3 surfaces are L-equivalent. Moreover, we give examples of derived equivalent twisted K3 surfaces, such that the underlying K3 surfaces are not L-equivalent. We also give examples showing that D-equivalent abelian varieties can be non-L-equivalent (the same examples were obtained independently in \cite{IMOU}). This disproves the original version of a conjecture of Kuznetsov and Schinder \cite[Conjecture 1.6]{KS}.

We deduce the statements on (non-)L-equivalence from the very general results on the Grothendieck group of an additive category, whose morphisms are finitely generated abelian groups. In particular, we show that in such a category each stable isomorphism class of objects contains only finitely many isomorphism classes. We also show that a stable isomorphism between two objects $X$ and $Y$ with $\End(X)=\Z$ implies that $X$ and $Y$ are isomorphic. 
\end{abstract}

\keywords{Grothendieck rings of varieties, Hodge structures, Krull-Schmidt categories}

\maketitle

\tableofcontents

\section{Introduction}

In this paper we consider the questions of L-equivalence of complex algebraic varieties, via integral Hodge realization. In particular, we disprove the original versions of conjectures of Huybrechts \cite[Conjecture 0.3]{H} and Kuznetsov-Schinder \cite[Conjecture 1.6]{KS}.

We recall the definition of the Grothendieck ring $K_0(\Var_{\C})$ of varieties. It is generated as an abelian group by the isomorphism classes $[X]$ of reduced separated schemes of finite type over $\C,$ subject to relation $[X]=[Z]+[U],$ whenever $Z\subset X$ is a closed subset, and $U=X\setminus Z.$ The product in $K_0(\Var_{\C})$ is given by the product of schemes. We denote by $\L$ the class of affine line $[\A^1].$

Algebraic varieties $X$ and $Y$ are called L-equivalent if the difference $[X]-[Y]$ vanishes in the localization $K_0(\Var_{\C})[\L^{-1}].$ In other words, for some $n>0$ we have $([X]-[Y])\cdot\L^n=0.$

The kernel of the homomorphism $K_0(\Var_{\C})\to K_0(\Var_{\C})[\L^{-1}]$ is hard to control, and finding non-trivial elements in it is an interesting and challenging task. The following conjectures have been made in the original versions of \cite{H} and \cite{KS} (both conjectures have been modified in the published versions of the papers).

\begin{conj}\label{conj:KS}\cite[original version of Conjecture 1.6]{KS} If $X$ and $Y$ are smooth projective simply connected varieties such that $D^b(X)\cong D^b(Y),$
then $X$ and $Y$ are L-equivalent.\end{conj}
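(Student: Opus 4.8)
The plan is to refute Conjecture~\ref{conj:KS} by producing simply connected smooth projective varieties $X$ and $Y$ with $D^b(X) \cong D^b(Y)$ but $[X] \neq [Y]$ in $K_0(\Var_{\C})[\L^{-1}]$, detecting the non-vanishing through the integral Hodge realization $K_0(\Var_{\C}) \to K_0(\HS)$, which sends $\L$ to the Tate structure $\Z(-1)$. As this is a ring homomorphism, L-equivalence of $X$ and $Y$ forces their total integral Hodge structures $\bigoplus_i H^i(X)$ and $\bigoplus_i H^i(Y)$ to become isomorphic after adding finitely many copies of $\Z(-1)$, i.e. to be stably isomorphic in the additive category of integral Hodge structures, whose morphism groups are finitely generated. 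The results quoted above then apply: each stable isomorphism class contains only finitely many isomorphism classes, and whenever the relevant transcendental Hodge structure $T$ satisfies $\End(T) = \Z$, stable isomorphism upgrades to genuine isomorphism. Hence L-equivalence of $X$ and $Y$ would force an isomorphism of their transcendental integral Hodge structures.

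The natural source of simply connected derived-equivalent pairs is the class of K3 surfaces, where I would first test the conjecture. By Orlov's theorem $D^b(X) \cong D^b(Y)$ for K3 surfaces is equivalent to the existence of a Hodge isometry of transcendental lattices, and this underlies a Hodge isometry of the full Mukai lattices $\wt{H}(X) \cong \wt{H}(Y)$. Passing to classes in $K_0(\HS)$ and cancelling the Tate summands coming from $H^0$ and $H^4$ (the odd cohomology vanishes), one gets $[H^2(X)] = [H^2(Y)]$, hence $[X] = [Y]$ already under the integral Hodge realization and a fortiori after inverting $\L$. Thus on untwisted derived-equivalent K3 surfaces the realization detects nothing at all: the necessary condition for L-equivalence extracted in the first step is automatically met. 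This is the crux of the difficulty --- the Hodge-theoretic obstruction to L-equivalence vanishes precisely on the pairs the conjecture concerns.

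To manufacture an actual obstruction I would pass to the twisted setting, choosing Brauer classes $\alpha \in \Br(X)$, $\beta \in \Br(Y)$ and a twisted derived equivalence $D^b(X, \alpha) \cong D^b(Y, \beta)$ in the sense of Huybrechts--Stellari. Such an equivalence induces a Hodge isometry of the twisted transcendental lattices $T(X, \alpha) \cong T(Y, \beta)$, whereas the untwisted Hodge structures $T(X)$ and $T(Y)$ can be arranged to be non-isomorphic. If $\End(T(X)) = \Z$, the results quoted above show that L-equivalence of $X$ and $Y$ would force $T(X) \cong T(Y)$; since this fails, $[X] \neq [Y]$ in $K_0(\Var_{\C})[\L^{-1}]$. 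Concretely I would take $Y$ to be a non-fine moduli space of sheaves on $X$, whose universal family exists only as an $\alpha^{-1}$-twisted sheaf, so that $X$ and $Y$ are twisted-derived-equivalent yet carry genuinely different untwisted periods.

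The main obstacle is that this refutes only the twisted analogue of the conjecture: a twisted derived equivalence does not yield $D^b(X) \cong D^b(Y)$, so the literal hypothesis is not met, and --- by the second step --- for genuinely untwisted derived-equivalent K3 surfaces the integral Hodge realization can never separate $X$ and $Y$. Refuting the statement exactly as worded therefore demands either an L-equivalence invariant strictly finer than the integral Hodge structure --- one sensitive to the Brauer class or to an integral motivic refinement, and not preserved by Fourier--Mukai kernels --- or a family of simply connected varieties beyond K3 surfaces, such as Calabi--Yau threefolds or hyperk\"ahler manifolds, whose Fourier--Mukai partners already differ in $K_0(\HS)$. Producing such an invariant or such a pair is the delicate point that distinguishes the conjecture as stated from its non-simply-connected and twisted variants.
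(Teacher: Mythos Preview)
Your proposal does not actually produce a counterexample: you correctly observe that the integral Hodge realization cannot separate untwisted derived-equivalent K3 surfaces, you pass to twisted K3 surfaces, and then you correctly note that this no longer meets the hypothesis $D^b(X)\cong D^b(Y)$. At that point you stop. The missing idea is that the paper does not work with K3 surfaces at all for this conjecture --- it uses abelian varieties (Theorem~\ref{th:L-equivalent_abelian}). Take $A$ with $\End(A)=\Z$ and $A$ not principally polarizable; then $D^b(A)\cong D^b(\hat A)$ by Mukai, but $A\not\cong\hat A$. Since $\End_{\HS_{\Z,1}}(H^1(A))=\End(A)^{op}=\Z$, Theorem~\ref{th:equals_classes_means_iso_additive} applied to the weight-$1$ piece of $\Hdg_{\Z}$ forces any $A'$ L-equivalent to $A$ to satisfy $H^1(A)\cong H^1(A')$, hence $A\cong A'$. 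So $A$ and $\hat A$ are D-equivalent but not L-equivalent. The point you missed is that for abelian varieties the entire variety is recovered from $H^1$, and $\End(H^1)=\Z$ is easy to arrange while still having non-isomorphic Fourier--Mukai partners --- exactly the configuration in which the $K_0$-rigidity theorem bites. By contrast, for K3 surfaces the relevant Hodge structure is $H^2$, and as you showed, derived equivalence already pins this down stably.

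Your closing concern is legitimate: abelian varieties are not simply connected, so the paper's counterexample does not literally satisfy the hypothesis as stated. The paper is refuting the \emph{original} version of the Kuznetsov--Schinder conjecture, and the text notes that the conjecture was subsequently modified in the published version --- presumably by inserting the simple-connectedness assumption precisely to exclude abelian varieties. Your twisted-K3 discussion is, incidentally, exactly what the paper carries out in Corollary~\ref{cor:twisted_but_not_L_existence} and Proposition~\ref{prop:twisted_but_not_L}, with the same caveat you raise. So your instincts about what the Hodge realization can and cannot see are sound; you simply did not look outside the K3 world for the example the paper actually uses.
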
 

\begin{conj}\label{conj:H}\cite[original version of Conjecture 0.3]{H} Let $S$ and $S'$ be complex projective K3 surfaces. The following conditions are equivalent.

(i) $H^2(S,\Q)\cong H^2(S',\Q)$ (isomorphism of rational Hodge structures);

(ii) $\mh(S)\cong\mh(S')$ (isomorphism of rational Chow motives);

(iii) $[S]=[S']$ in (an appropriate localization of) $K_0(\Var_{\cC})[\L^{-1}].$\end{conj}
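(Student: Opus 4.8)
The statement asserts the equivalence of three conditions. The implication (ii) $\Rightarrow$ (i) is standard: applying the Hodge realization to an isomorphism of rational Chow motives $\mh(S)\cong\mh(S')$ yields an isomorphism $H^2(S,\Q)\cong H^2(S',\Q)$, and the various realizations relate (iii) to (i)--(ii) as well. The delicate direction, which I expect to be \emph{false}, is (i) $\Rightarrow$ (iii): that isogenous K3 surfaces must have equal classes after inverting $\L$. My plan is therefore to disprove this implication, which already suffices to disprove the claimed equivalence.

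First I would upgrade L-equivalence to a statement inside an additive category. If $S$ and $S'$ are L-equivalent, then $\L^n([S]-[S'])=0$ in $K_0(\Var_{\C})$ for some $n>0$. Applying the integral realization into the split Grothendieck group of a suitable additive category $\cC$ of integral polarizable Hodge structures (equivalently, of the relevant integral Chow motives), and using that the class of $\L$ maps to the Tate object $\Z(-1)$, I would like this annihilation relation to force a \emph{stable isomorphism}: the integral Hodge structures of $S$ and $S'$ become isomorphic after adding Tate summands. Isolating the transcendental part, which carries no Tate sub- or quotient object, I would conclude that the integral transcendental Hodge structures $T(S)$ and $T(S')$ are stably isomorphic in $\cC$. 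The Hom groups of $\cC$ are finitely generated abelian groups, so the general theorems quoted in the abstract apply verbatim.

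Next I would invoke those theorems. Restricting to K3 surfaces whose transcendental Hodge structure carries no extra endomorphisms, $\End(T(S))=\Z$, the stable-isomorphism-implies-isomorphism result forces $T(S)\cong T(S')$ as \emph{integral} Hodge structures; even without this hypothesis, the finiteness result already ensures that each stable isomorphism class meets only finitely many isomorphism classes. The decisive step is then a construction: I would exhibit a pair (indeed an infinite family) of K3 surfaces that are isogenous, so that $H^2(S,\Q)\cong H^2(S',\Q)$ and (i) holds, while their integral transcendental lattices are pairwise non-isomorphic, for instance because their discriminants differ by a prime-square factor, yet still satisfy $\End=\Z$. Such surfaces satisfy (i) but, by the preceding step, violate (iii), disproving the equivalence.

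The main obstacle is the first step: passing rigorously from the relation $\L^n([S]-[S'])=0$ in $K_0(\Var_{\C})$ to an honest stable isomorphism of integral Hodge structures, rather than a mere equality of classes in a Grothendieck group built from short exact sequences. This requires a realization landing in the additive (split) Grothendieck group of $\cC$ together with a cancellation property for the Tate twist $\Z(-1)$, which is invertible under $\otimes$ but a priori does not separate $\oplus$-classes, so that the factor $\L^n$ can be stripped off at the level of isomorphism classes. The secondary difficulty is arithmetic: producing isogenous K3 surfaces with controlled endomorphisms and provably non-isomorphic integral transcendental lattices, which I would arrange via explicit lattice embeddings, surjectivity of the period map, and Shioda--Inose type constructions.
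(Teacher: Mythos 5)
Your overall architecture coincides with the paper's: L-equivalence is pushed through the integral Hodge realization $\Hdg_{\Z}:K_0(\Var_{\C})[\L^{-1}]\to K_0^{add}(\HS_{\Z})$ (well defined on Bittner's presentation, with $\L$ acting invertibly because the Tate twist is an autoequivalence of $\HS_{\Z}$ --- so the ``main obstacle'' you flag is resolved exactly as you hope, and equality in the split Grothendieck group is by definition a stable isomorphism); the transcendental part is then isolated by an additive endofunctor (Lemma~\ref{lem:trans_cycles_from_L-equiv}), and the general theorems on additive categories with finitely generated Hom-groups (Theorems~\ref{th:equals_classes_means_iso_additive} and~\ref{th:fin_many_iso_classes_additive}) do the rest. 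Where you diverge is in how the counterexample pair is produced. The paper constructs nothing explicitly: by \cite[Theorem 0.1]{HLOY} each Kummer surface comes from only finitely many abelian surfaces, so each isogeny class of Kummer surfaces contains infinitely many isomorphism classes, while Theorem~\ref{th:fin_many_iso_classes_K3} (which combines Theorem~\ref{th:fin_many_iso_classes_additive} with Proposition~\ref{prop:fin_many_for_T_X}) shows each L-equivalence class contains only finitely many; pigeonhole finishes Corollary~\ref{cor:counterexamples_conj_H}. This counting argument needs no $\End(T)=\Z$ hypothesis and no lattice arithmetic, whereas your route buys an explicit pair at the cost of a genuine construction.

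That construction step, as you describe it, has a gap. The discriminant is an invariant of the lattice $T(S)$, not of the underlying integral Hodge structure, and condition (iii) only sees the latter; so ``discriminants differing by a prime-square factor'' does not by itself preclude an isomorphism $T(S)\cong T(S')$ of Hodge structures. You must first use the rigidity $\End_{\HS_{\Z,2}}(T(S))=\Z$ to show that any Hodge isomorphism transports the intersection form of $T(S')$ to a rational multiple $a\langle\,,\,\rangle$ of that of $T(S)$, so that the discriminant ratio is forced to be $a^{\rk T}$; only a ratio that is not a $\rk T$-th power (compatibly with the integrality constraints on $a$) then yields a contradiction. This is precisely the mechanism of Proposition~\ref{prop:twisted_but_not_L}, where $T(S')=\ker(\alpha:T(S)\to\Z/n\Z)$ is a finite-index Hodge substructure and rigidity forces any putative isomorphism to be multiplication by an integer. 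You would also still need to check that your pair is isogenous (isomorphic rational Hodge structures on all of $H^{2}$, hence in particular equal Picard numbers) and that $\End(T)=\Z$ survives your lattice choices. None of this is unfeasible, but it is the substantive content of your proof and is currently only gestured at; the paper's counting argument sidesteps it entirely.
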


Below we disprove Conjecture \ref{conj:H} (implication (i)$\Rightarrow$(iii)) in its original version (Corollary \ref{cor:counterexamples_conj_H}). Moreover, our argument shows that Conjecture \ref{conj:H} does not hold even if we localize $K_0(\Var_{\C})[\L^{-1}]$ with respect to some set of monic polynomials in $\L$ (say, with respect to $(\L^n-1)$ for all $n>0,$ or, equivalently, with respect to $[GL_n(\C)],$ $n>0$), see Remark \ref{remark:further_loc}.

We give examples of derived equivalent twisted K3 surfaces such that the underlying K3 surfaces are non-L-equivalent (Corollary \ref{cor:twisted_but_not_L_existence}, Proposition \ref{prop:twisted_but_not_L}).

We also give counterexamples to Conjecture \ref{conj:KS} (Theorem \ref{th:L-equivalent_abelian}). The same examples were independently obtained in \cite{IMOU}.

Moreover, we show that both for abelian varieties and K3 surfaces there are only finitely many varieties in each L-equivalence class (Theorems \ref{th:fin_many_iso_classes_abelian_var}, \ref{th:fin_many_iso_classes_K3}).

Our tool to distinguish L-equivalence classes is the integral Hodge realization. Namely, we use the homomorphism from $K_0(\Var_{\C})[\L^{-1}]$ to the Grothendieck group $K_0^{add}(\HS_{\Z})$ of the additive category of pure integral Hodge structures, see Section \ref{sec:L-equivalence}. 

All the statements on non-L-equivalence are deduced from the general results on Grothendieck groups of additive categories whose morphisms are finitely generated abelian groups, see Section \ref{sec:K_0_of_additive}. In particular, we show that in such an additive category stable isomorphism between $X$ and $Y$ implies an isomorphism if $\End(X)=\Z$ (Theorem \ref{th:equals_classes_means_iso_additive}). We also show that in each stable isomorphism class there are only finitely many isomorphism classes (Theorem \ref{th:fin_many_iso_classes_additive}). Our tools are elementary: reduction to Krull-Schmidt categories (by tensoring with $\FF_p$) and Jordan-Zassenhaus theorem for orders in finite-dimensional semi-simple $\Q$-algebras.

{\noindent}{\bf Acknowledgements.} I am grateful to Alexander Kuznetsov, Valery Lunts and Dmitri Orlov for useful discussions. I am especially grateful to D. Huybrechts for his comments on the previous version of the paper and for pointing out a minor mistake in the proof of Corollary \ref{cor:counterexamples_conj_H}. I am also grateful to anonymous referee for useful comments and suggestions, in particular for suggesting Corollary \ref{cor:twisted_but_not_L_existence}.

\section{On the Grothendieck group of an additive category.}
\label{sec:K_0_of_additive}

 For an additive category $\cB$ and an associative unital ring $R,$ we denote by $\cB_R$ the tensor product $\cB\otimes_{\Z} R.$ This is an additive category with the same objects as $\cB,$ and the morphisms are given by $\cB_R(X,Y)=\cB(X,Y)\otimes_{\Z} R.$ The composition in $\cB_R$ is induced by the composition in $\cB$ and the product in $R.$

We recall that an additive category $\cB$ is called a Krull-Schmidt category if each object can be decomposed into a finite direct sum of objects having local endomorphism rings \cite{At, Au1, Au2, GR, Kr}. This in particular implies that any object $X\in\cB$ can be decomposed into a finite direct sum of indecomposable objects, and the collection of indecomposable summands is determined uniquely up to isomorphism. If $\cB$ is linear over a field, Karoubi complete, and has finite-dimensional morphism spaces, then $\cB$ is a Krull-Schmidt category.

For any additive category $\cB,$ we denote by $\bar{\cB}$ its Karoubi completion.
We will need the following standard facts.

\begin{prop}\label{prop:K_0_Krull_Schmidt}1) For a small additive Krull-Schmidt category $\cB,$ we have a natural isomorphism $K_0(\cB)\cong\Z^{(S)},$ where $S$ is the set of isomorphism classes of indecomposable objects in $\cB,$ and $\Z^{(S)}$ denotes the free abelian group generated by $S.$ In particular, for $X,Y\in\cB,$ the equality $[X]=[Y]$ in $K_0(\cB)$ implies $X\cong Y.$

2) For any small additive category $\cB,$ the natural morphism $K_0(\cB)\to K_0(\bar{\cB})$ is injective.\end{prop}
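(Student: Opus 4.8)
The plan is to prove the two parts of Proposition~\ref{prop:K_0_Krull_Schmidt} separately, as they rest on different mechanisms.

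\textbf{Part 1.} For a small additive Krull-Schmidt category $\cB$, I would construct the isomorphism $K_0(\cB)\cong\Z^{(S)}$ directly. First recall that $K_0(\cB)$ is the free abelian group on isomorphism classes $[X]$ of objects, modulo the relations $[X\oplus Y]=[X]+[Y]$. By the Krull-Schmidt property, every object $X$ decomposes as a finite direct sum $X\cong\bigoplus_i P_i^{\oplus n_i}$ of indecomposables with local endomorphism rings, and this multiset of indecomposable summands is unique up to isomorphism. I would define a homomorphism $\varphi\colon K_0(\cB)\to\Z^{(S)}$ by sending $[X]$ to the vector of multiplicities $(n_i)$ of each indecomposable class; this is well-defined on $K_0$ because the multiplicities are additive under $\oplus$ and the relations in $K_0$ are generated precisely by direct-sum decompositions. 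Conversely, sending a basis element of $\Z^{(S)}$ (an indecomposable class $[P]$) to $[P]\in K_0(\cB)$ gives an inverse, since the classes $[P]$ for $P\in S$ generate $K_0(\cB)$ (every $[X]$ is a $\Z$-combination of them via its decomposition) and are $\Z$-linearly independent (by uniqueness of the decomposition, no nontrivial relation among distinct indecomposables can be forced by the defining relations of $K_0$). The final assertion follows: if $[X]=[Y]$ in $K_0(\cB)$, then $X$ and $Y$ have the same multiplicity vector, hence the same indecomposable summands up to isomorphism, so $X\cong Y$.

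\textbf{Part 2.} For an arbitrary small additive category $\cB$, I want injectivity of $K_0(\cB)\to K_0(\bar\cB)$. The idea is to produce a retraction at the level of Grothendieck groups. Every object of the Karoubi completion $\bar\cB$ is a direct summand of an object of $\cB$, i.e.\ is represented by a pair $(X,e)$ with $e\in\End_{\cB}(X)$ idempotent. In $K_0(\bar\cB)$ one has $[(X,e)]+[(X,1-e)]=[X]$, so every class in $K_0(\bar\cB)$ is a $\Z$-combination of classes coming from genuine objects of $\cB$. I would define a map $K_0(\bar\cB)\to K_0(\cB)$ by $[(X,e)]\mapsto[X]$ and check it is well-defined: the relations defining $K_0(\bar\cB)$ are additivity under direct sums in $\bar\cB$, and $(X,e)\oplus(X,1-e)\cong X$ together with $(X\oplus Y,\,e\oplus f)\cong(X,e)\oplus(Y,f)$ show that this assignment respects all such relations. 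Composing $K_0(\cB)\to K_0(\bar\cB)\to K_0(\cB)$ gives the identity, since an honest object $X\in\cB$ maps to $[(X,\id)]\mapsto[X]$, which proves injectivity of the first map.

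The main obstacle in Part~2 is verifying that the retraction $[(X,e)]\mapsto[X]$ is genuinely well-defined, i.e.\ independent of the chosen presentation of an object of $\bar\cB$ and compatible with every defining relation of $K_0(\bar\cB)$. Two different idempotent presentations $(X,e)$ and $(X',e')$ representing isomorphic objects of $\bar\cB$ must be shown to yield equal classes $[X]=[X']$ in $K_0(\cB)$; the cleanest route is to observe that an isomorphism $(X,e)\cong(X',e')$ in $\bar\cB$ is witnessed by morphisms in $\cB$ factoring through the idempotents, and then use the complementary summands to rewrite $[X]$ and $[X']$ as equal combinations in $K_0(\cB)$. Once this bookkeeping is handled, the composite being the identity is immediate. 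Part~1 is essentially formal given the Krull-Schmidt hypothesis, so I expect the only real care to be needed in this idempotent-splitting argument of Part~2.
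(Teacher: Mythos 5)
Your Part 1 is correct and is essentially the paper's argument (the paper simply says it "follows immediately from the unique decomposition into indecomposables"; your multiplicity-vector map is the standard way to spell that out, with Krull--Schmidt supplying both well-definedness and the cancellation needed to see that the monoid of isomorphism classes injects into its group completion).

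Part 2, however, contains a genuine error: the retraction $[(X,e)]\mapsto[X]$ is not well-defined, and the very relation you cite as evidence that it is well-defined in fact refutes it. In $\bar\cB$ one has $(X,e)\oplus(X,1-e)\cong(X,\id_X)$, so compatibility with direct sums would force $[X]+[X]=[X]$ in $K_0(\cB)$, i.e.\ $[X]=0$ for every $X$. So no such homomorphism exists unless $K_0(\cB)=0$. Relatedly, your intermediate claim that every class in $K_0(\bar\cB)$ is a $\Z$-combination of classes of objects of $\cB$ does not follow from $[(X,e)]+[(X,1-e)]=[X]$ (that identity expresses a sum of two "new" classes in terms of an "old" one, not each new class separately), and the claim is false in general: for $\cB$ the category of finitely generated free modules over a Dedekind domain $R$ with nontrivial class group, $K_0(\cB)=\Z$ while $K_0(\bar\cB)=\Z\oplus\mathrm{Cl}(R)$, and the free classes generate only the first summand. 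So injectivity cannot be proved by exhibiting this kind of canonical section.

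The correct (and the paper's) argument is the standard cofinality/cancellation one, which avoids constructing any map backwards. Suppose $[X]=[Y]$ in $K_0(\bar\cB)$ with $X,Y\in\cB$. Then there is $Z\in\bar\cB$ with $X\oplus Z\cong Y\oplus Z$. Since every object of $\bar\cB$ is a direct summand of an object of $\cB$, choose $Z'\in\bar\cB$ with $Z\oplus Z'$ in the essential image of $\cB$. Adding $Z'$ to both sides gives $X\oplus(Z\oplus Z')\cong Y\oplus(Z\oplus Z')$, an isomorphism witnessed entirely inside $\cB$, whence $[X]=[Y]$ already in $K_0(\cB)$. You should replace your retraction argument with this.
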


\begin{proof}Part 1) follows immediately from the unique decomposition into indecomposables.

Part 2) is proved as follows. Suppose that $X,Y\in\cB,$ and $[X]-[Y]\in K_0(\cB)$ is mapped to zero in $K_0(\bar{\cB}).$ Then there exists an object $Z\in \bar{\cB}$ such that $X\oplus Z\cong Y\oplus Z.$ Let us choose $Z'\in\bar{\cB},$ such that $Z\oplus Z'$is contained in the essential image of $\cB.$ Then we have $X\oplus(Z\oplus Z')\cong Y\oplus(Z\oplus Z'),$ hence $X$ and $Y$ have the same class in $K_0(\cB).$\end{proof}

From now on in this section we assume that $\cA$ is a small additive category, such that the abelian groups of morphisms $\cA(X,Y)$ are finitely generated. By the above discussion, for any prime number $p,$ the Karoubi completion $\overline{\cA_{\FF_p}}$ is a Krull-Schmidt category.

\begin{lemma}\label{lem:direct_summands_and_classes_K_0} Suppose that $X,Y\in \cA$ are two objects such that $[X]=[Y]$ in $K_0(\cA).$ Then $X$ is a retract of $Y^n$ for some $n>0.$\end{lemma}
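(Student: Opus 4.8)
The plan is to reformulate the conclusion in terms of a \emph{trace ideal} and then pin it down by reducing modulo every prime $p$ and over $\Q$. Writing $E=\End_\cA(X)$, let $I\subseteq E$ be the subgroup of those endomorphisms of $X$ that factor through some power $Y^n$; concretely $I$ is the image of the composition pairing $\cA(Y,X)\otimes_\Z\cA(X,Y)\to E$. One checks directly that $I$ is a two-sided ideal of $E$, so $Q:=E/I$ is a ring whose unit is the class $\bar 1$ of $\id_X$. The key observation is that $X$ is a retract of $Y^n$ for some $n>0$ if and only if $\id_X\in I$, that is, if and only if $\bar 1=0$ in $Q$. Thus the whole statement becomes the assertion $Q=0$.

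First I would extract the rational information. Since the groups $\cA(X,Y)$ are finitely generated, $\cA_\Q$ has finite-dimensional morphism spaces, so $\overline{\cA_\Q}$ is Krull-Schmidt; by Proposition \ref{prop:K_0_Krull_Schmidt} the equality $[X]=[Y]$ forces $X\cong Y$ in $\overline{\cA_\Q}$, and since $X,Y$ already lie in $\cA_\Q$ this isomorphism is realized by honest morphisms over $\Q$. Clearing denominators yields integral $\tilde f\colon X\to Y$ and $\tilde g\colon Y\to X$ with $\tilde g\tilde f=k\cdot\id_X$ for some integer $k\geq1$. Hence $k\cdot\id_X\in I$, so $\bar 1$ is killed by $k$; as $\bar 1$ is the unit of $Q$ this gives $kQ=0$, and since $Q$ is finitely generated it is a finite ring.

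Next I would extract the local information at each prime. The excerpt already records that $\overline{\cA_{\FF_p}}$ is Krull-Schmidt, so again by Proposition \ref{prop:K_0_Krull_Schmidt} we get $X\cong Y$ in $\overline{\cA_{\FF_p}}$, hence $X$ is a retract of $Y$ in $\cA_{\FF_p}$. Unwinding this through the identification of the mod-$p$ trace ideal with $(I+pE)/pE$, it says precisely $\id_X\in I+pE$, that is $\bar 1\in pQ$, and this holds for every prime $p$.

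Finally I would combine the two inputs. Suppose $Q\neq0$. Being a nonzero finite ring, it has characteristic $d\geq2$; choose a prime $p\mid d$. Then $p$ divides $|Q|$, so $Q\otimes_\Z\FF_p\neq0$ and therefore $pQ\subsetneq Q$. But $pQ$ is a two-sided ideal, so it cannot contain the unit $\bar 1$, contradicting $\bar 1\in pQ$. Hence $Q=0$, and $X$ is a retract of $Y^n$ for some $n$. The step I expect to be the main obstacle is not the local and rational reductions, which are routine given Proposition \ref{prop:K_0_Krull_Schmidt}, but the bookkeeping that converts ``$\bar 1$ is divisible by every prime'' into an actual vanishing: the naive version of this statement is false for torsion abelian groups, and what rescues the argument is precisely that $\bar 1$ is the \emph{unit} of the quotient ring $Q$, so that $\bar 1\in pQ$ forces $pQ=Q$. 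Getting the trace-ideal formalism and this ring-theoretic point exactly right is the crux.
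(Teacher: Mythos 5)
Your proof is correct and follows essentially the same route as the paper: the trace ideal $I\subseteq\End(X)$ of endomorphisms factoring through powers of $Y$, combined with the Krull--Schmidt property of $\overline{\cA_{\FF_p}}$ and Proposition \ref{prop:K_0_Krull_Schmidt}, is exactly the paper's mechanism. The only difference is organizational --- the paper assumes $I\neq\End(X)$, picks a single prime $p$ with $(\End(X)/I)\otimes\FF_p\neq0$, and derives a contradiction there, whereas you reduce at all primes (and at $\Q$, which is not actually needed, since a nonzero finitely generated abelian group always has a prime $p$ with $pQ\neq Q$) and then assemble the conclusion; both are valid.
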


\begin{proof}Let us denote by $I\subset\End(X)$ the  abelian subgroup generated by the compositions $X\to Y\to X.$ Clearly, $I$ is a two-sided ideal in $\End(X).$ It suffices to prove that $I=\End(X).$ Indeed, if this is the case, then for some $n>0$ we can find the morphisms $f_1,\dots,f_n:X\to Y,$ and $g_1,\dots,g_n:Y\to X$ such that $\sum\limits_{i=1}^n g_if_i=\one_X.$ Then the n-tuple $(f_1,\dots,f_n)$ (resp. $(g_1,\dots,g_n)$) defines a morphism $f:X\to Y^n$ (resp. $g:Y^n\to X$), such that $gf=\one_X.$

Now assume the contrary: the inclusion $I\subset\End(X)$ is strict. Then we can find a prime number $p$ such that $p$ acts non-trivially on the quotient $\End(X)/I.$ Let us denote by $\pr:\cA\to \cA_{\FF_p}$ the natural functor. The compositions $\pr(X)\to \pr(Y)\to \pr(X)$ generate the ideal $I'\subset \End(\pr(X)),$ which equals $\im(I\otimes\FF_p\to \End(X)\otimes \FF_p).$ By our assumptions on $p$ and $I,$ the inclusion $I'\subset\End(\pr(X))$ is strict. In particular, the objects $\pr(X)$ and $\pr(Y)$ are non-isomorphic. Since the Karoubi closure $\overline{\cA_{\FF_p}}$ is Krull-Schmidt, we deduce from Proposition \ref{prop:K_0_Krull_Schmidt} that $[\pr(X)]\ne [\pr(Y)]$ in $K_0(\cA_{\FF_p}),$ which contradicts the equality $[X]=[Y].$\end{proof}

For an arbitrary ring $R,$ we denote by $K_+(R)$ the monoid of isomorphism classes of finitely generated projective right R-modules.  If $R$ is noetherian, we denote by $K_0'(R)$ the Grothendieck group of the abelian category $\mod\mhyphen R$ of finitely generated right $R$-modules. 

\begin{theo}\label{th:equals_classes_means_iso_additive}Suppose that $X\in\cA$ is an object such that the map $K_+(\End(X))\to K_0'(\End(X))$ is injective (for example, this holds if $\End(X)\cong\Z,$ since $K_+(\Z)=\Z_{\geq 0}$ clearly injects into $K_0'(\Z)=\Z$). Then for any object $Y\in\cA$ such that $[X]=[Y]$ we have $X\cong Y.$\end{theo}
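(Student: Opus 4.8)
The plan is to leverage Lemma~\ref{lem:direct_summands_and_classes_K_0} to pass from the equality $[X]=[Y]$ in $K_0(\cA)$ to a concrete projective-module statement over $R:=\End(X)$, and then to use the injectivity hypothesis $K_+(R)\hookrightarrow K_0'(R)$ to upgrade a stable isomorphism of projectives into an honest isomorphism, from which $X\cong Y$ will follow. First I would record that by Lemma~\ref{lem:direct_summands_and_classes_K_0} applied to $X$ (and symmetrically to $Y$, since $[X]=[Y]$ gives $[Y]=[X]$), the object $X$ is a retract of $Y^n$ and $Y$ is a retract of $X^m$ for suitable $n,m>0$. In particular there are morphisms exhibiting $X$ as a direct summand of a power of $Y$, which is the categorical input I will convert into module language.

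\medskip

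The key idea is to apply the representable functor $\Hom_{\cA}(X,-)$, which sends $\cA$ into the category of right $\End(X)$-modules. Under this functor, $\Hom(X,X)=R$ as a right $R$-module, so $\Hom(X,X^k)\cong R^k$ is free, and any retract of $X^k$ maps to a finitely generated projective right $R$-module. The plan is then as follows. From $[X]=[Y]$ in $K_0(\cA)$ I want to produce an equality of classes $[\Hom(X,X)]=[\Hom(X,Y)]$ in $K_0'(R)$: indeed the additive functor $\Hom(X,-)$ lands $\cA$-direct-sum decompositions to $R$-module direct sums, and since $X$ and $Y$ become, after passing to the Karoubi completion, summands of powers of one another, the images $P:=\Hom(X,X)=R$ and $Q:=\Hom(X,Y)$ are both finitely generated projective right $R$-modules with $[P]=[Q]$ in $K_0'(R)$. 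By the injectivity of $K_+(R)\to K_0'(R)$, the equality $[P]=[Q]$ forces $P\cong Q$ as $R$-modules, i.e.\ $\Hom(X,X)\cong\Hom(X,Y)$.

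\medskip

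The final step is to descend this module isomorphism back to an isomorphism of objects in $\cA$. Here I would use that $X$ and $Y$ are retracts of powers of each other, so that $X,Y$ both lie in the idempotent-split closure generated by a single object, and the Yoneda-type functor $\Hom(X,-)$ restricted to the additive-Karoubi subcategory $\add(X)$ (containing $X$ and, after our retraction arguments, $Y$) is fully faithful onto the category of finitely generated projective $R$-modules. Under this equivalence, the $R$-module isomorphism $\Hom(X,X)\cong\Hom(X,Y)$ corresponds precisely to an isomorphism $X\cong Y$ in $\overline{\cA}$; since both $X$ and $Y$ already lie in $\cA$ and are genuine retracts of objects of $\cA$, the isomorphism descends to $\cA$ itself.

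\medskip

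The main obstacle I expect is making the passage in the second paragraph fully rigorous: namely, checking that $Q=\Hom(X,Y)$ really is a finitely generated \emph{projective} $R$-module and that $[\Hom(X,X)]=[\Hom(X,Y)]$ genuinely holds in $K_0'(R)$ rather than merely in some weaker sense. Projectivity should follow from $X$ being a retract of $Y^n$ together with $Y$ being a retract of $X^m$ (so $Y\in\add(X)$ in the Karoubi closure, whence $\Hom(X,Y)$ is a summand of $\Hom(X,X^m)=R^m$); and the equality of $K_0'$-classes should follow because $\Hom(X,-)$ is exact on the additive/split-exact structure and sends the identity $[X]=[Y]$ to $[\Hom(X,X)]=[\Hom(X,Y)]$. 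The delicate point is that $\Hom(X,-)$ is only additive, not a priori exact in the abelian sense needed for $K_0'(R)$, so one must argue via the split short exact sequences furnished by the retractions rather than by naive functoriality, and confirm that the injectivity hypothesis $K_+(R)\hookrightarrow K_0'(R)$ is exactly what bridges the projective monoid to the Grothendieck group of all finitely generated modules.
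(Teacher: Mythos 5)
Your proposal is correct and follows essentially the same route as the paper: apply Lemma~\ref{lem:direct_summands_and_classes_K_0} (with the roles of $X$ and $Y$ swapped) to place $Y$ in $\add(X)$, use the equivalence $\Hom(X,-):\add(X)\xto{\sim}\Proj_{f.g.}\mhyphen R$ to translate $[X]=[Y]$ into $[R]=[\Hom(X,Y)]$ in $K_0'(R)$, invoke the injectivity of $K_+(R)\to K_0'(R)$ to get $R\cong\Hom(X,Y)$, and transport back. The "delicate point" you flag is resolved exactly as you suggest: the equality of classes already holds in the additive Grothendieck group of $\mod\mhyphen R$ via the split decompositions, hence in $K_0'(R)$.
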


\begin{proof}By Proposition \ref{prop:K_0_Krull_Schmidt} 2), we may and will assume that $\cA$ is Karoubi complete. Let us denote by $\add(X)\subset \cA$ the full subcategory consisting of direct summands of direct sums of copies of $X.$ Also, we put $R:=\End(X),$ and denote by 
$\Proj_{f.g.}\mhyphen R$ 
the category of finitely generated projective right $R$-modules. 

Then the functor $\Hom(X,-):\cA\to\mod\mhyphen R$ restricts to an equivalence 
$\add(X)\xto{\sim}\Proj_{f.g.}\mhyphen R.$ Now take some $Y\in\cA$ such that $[X]=[Y].$ Applying the functor $\Hom(X,-)$ to the objects $X$ and $Y,$ we see that $[R]=[\Hom(X,Y)]$ in $K'_0(R)$ (in fact the classes of $R$ and $\Hom(X,Y)$ are already equal in the Grothendieck group of $\mod\mhyphen R$ as an additive category, hence also in $K_0'(R)$). Applying Lemma \ref{lem:direct_summands_and_classes_K_0} to $X$ and $Y$ (with roles inversed),  we obtain $Y\in\add(X),$ hence $\Hom(X,Y)\in \Proj_{f.g.}\mhyphen R.$ This in turn yields the isomorphism of $R$-modules $R\cong\Hom(X,Y),$ since the map $K_+(R)\to K'_0(R)$ is injective. Finally, since $Y\in\add(X),$ by applying a quasi-inverse equivalence $(\Hom(X,-))^{-1}:\Proj_{f.g.}\mhyphen R\xto{\sim}\add(X)$ we conclude that $X\cong Y$ in $\cA$.\end{proof}

In this paper we apply Theorem \ref{th:equals_classes_means_iso_additive} only to the cases when $\End(X)\cong\Z.$

We now turn to showing that each stable isomorphism class of objects in $\cA$ contains only finitely many isomorphism classes. For any unital ring $R,$ we denote by $R^{\times}$ the group of invertible elements in $R.$

\begin{lemma}\label{lem:surj_artinian}Let $\varphi:A\to B$ be a surjective homomorphism of artinian rings. Then the group homomorphism $\varphi^{\times}:A^{\times}\to B^{\times}$ is surjective.\end{lemma}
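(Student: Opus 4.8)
The plan is to exploit the fact that an artinian ring is a product of local (more precisely, matrix-over-local after passing to the semisimple quotient, but here the relevant structure is the Jacobson radical) and to lift invertibility through the nilpotent part. First I would reduce to the nilpotent kernel. Let $J=\ker\varphi$; since $A$ is artinian, $J$ is a nilpotent two-sided ideal, say $J^N=0$. Given $b\in B^{\times}$, pick any preimage $a\in A$ with $\varphi(a)=b$, and pick a preimage $a'$ of $b^{-1}$. Then $\varphi(aa')=1$ and $\varphi(a'a)=1$, so $aa'=1-j$ and $a'a=1-j'$ with $j,j'\in J$. The key observation is that $1-j$ is invertible in $A$ because $j$ is nilpotent: $(1-j)^{-1}=1+j+j^2+\dots+j^{N-1}$.

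The forward-looking step is to conclude that $a$ itself is invertible, which requires a little care since $A$ need not be commutative. From $aa'=1-j$ invertible I get that $a$ is right-invertible, with right inverse $a'(1-j)^{-1}$; from $a'a=1-j'$ invertible I get that $a$ is left-invertible. An element of any (even noncommutative) ring that is both left- and right-invertible is a two-sided unit, and its two one-sided inverses coincide. Hence $a\in A^{\times}$, and by construction $\varphi(a)=b$, establishing surjectivity of $\varphi^{\times}$.

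The main obstacle, such as it is, is simply making sure the nilpotence of $J$ is correctly invoked and that the noncommutative bookkeeping (left- versus right-invertibility) is handled cleanly rather than assuming commutativity; there is no deep difficulty here. I would note that artinian-ness enters only through the nilpotence of the radical and through the fact that $J=\ker\varphi$ is contained in (indeed, for a surjection onto $B$, its kernel need not literally be the radical of $A$, but it is a nilpotent ideal because $A/J\cong B$ is artinian and $A$ is artinian, so $J\subseteq\rad(A)$, which is nilpotent). An even slicker alternative I might prefer is to bypass explicit preimages: the composite $A^{\times}\to (A/J)^{\times}$ has image all of $(A/J)^{\times}$ precisely because $1+J\subseteq A^{\times}$, i.e. the projection $A\onto A/J$ with nilpotent kernel is a surjection on unit groups, and this is exactly the statement for $B\cong A/J$. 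Either way the crux is the geometric-series inversion of $1-j$ for nilpotent $j$.
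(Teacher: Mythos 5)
There is a genuine gap at the first step: the kernel of a surjective homomorphism of artinian rings need \emph{not} be nilpotent, and the inference ``$A$ and $A/J\cong B$ are artinian, so $J\subseteq\rad(A)$'' is false. Take $A=k\times k$ for a field $k$ and let $\varphi$ be the projection onto the first factor: then $J=0\times k$ satisfies $J^2=J\neq 0$, while $\rad(A)=0$, so $J\not\subseteq\rad(A)$. Your main argument then fails concretely: for $b=1\in B^{\times}$ you may pick preimages $a=a'=(1,0)$, so that $aa'=1-j$ with $j=(0,1)$ idempotent rather than nilpotent; $1-j=(1,0)$ is not invertible, and this preimage $a$ is not a unit at all (the unit preimage $(1,1)$ exists, but nothing in your argument produces it). The ``slicker alternative'' has the same defect, since $1+J\subseteq A^{\times}$ for a two-sided ideal $J$ is \emph{equivalent} to $J\subseteq\rad(A)$.

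What is missing is the semisimple part of the statement. The paper's proof runs in two stages: for semisimple $A$ it uses the Wedderburn decomposition $A\cong\prod_i A_i$ into simple factors, observes that $\ker\varphi$ is a subproduct of the factors, and lifts a unit of $B$ by inserting $1$ in the complementary factors; in the general case it shows $\varphi(\rad(A))=\rad(B)$, lifts the class of a given $x\in B^{\times}$ through the semisimple quotients to some $y\in A^{\times}$ with $\varphi(y)\equiv x$ modulo $\rad(B)$, and then corrects $y$ by an element $z\in\rad(A)$ with $\varphi(z)=x-\varphi(y)$. Your geometric-series inversion of $1-j$ and the left/right-inverse bookkeeping are exactly the right tool for that last step --- lifting units through a quotient by a nilpotent ideal --- but they cannot carry the whole lemma on their own; you need the Wedderburn/semisimple step to handle the part of $\ker\varphi$ that lies outside the radical.
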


\begin{proof}First let us assume that $A$ is semi-simple. Then $A$ is isomorphic to the direct product of simple artinian rings (which are matrix algebras over skew-fields): $A\cong \prod\limits_{i=1}^k A_i.$ The ideal $\ker(\varphi)$ is isomorphic to a direct product of some of the $A_i$'s: $\ker(\varphi)=\prod\limits_{i\in M}A_i,$ $M\subseteq\{1,\dots,k\}.$ Hence $B\cong \prod\limits_{\substack{1\leq i\leq k\\ i\not\in M}}A_i$ is also semi-simple and we have a surjection $$A^{\times}\cong\prod\limits_{i=1}^k A_i^{\times}\onto\prod\limits_{\substack{1\leq i\leq k\\ i\not\in M}}A_i^{\times}\cong B^{\times}.$$

Now we consider the general case. Denote by $J_A\subset A,$ $J_B\subset B$ the Jacobson radicals. Since $J_A$ consists of nilpotent elements and $\varphi(J_A)\subset B$ is an ideal, we have $\varphi(J_A)\subseteq J_B.$ On the other hand, since $B/\varphi(J_A)$ is semi-simple (by the above discussion), we see that $\varphi(J_A)=J_B.$ Note that an element of $A$ is invertible if and only if its image in $A/J_A$ is invertible.

Now take any $x\in B^{\times}.$ We can choose some $y\in A^{\times}$ such that $\varphi(y)\in x+J_B.$ Finally, we can choose some $z\in J_A$ such that $\varphi(z)=x-\varphi(y).$ Then $y+z\in A^{\times}$ and $\varphi(y+z)=x.$ This proves the lemma.\end{proof}

\begin{lemma}\label{lem:surj_on_invertible}Let $f:R\to S$ be a surjective homomorphism of unital rings such that

(i) the ring $R$ is a finitely generated abelian group;

(ii) the ideal $I=\ker(f)$ is finite;

(iii) $f:R\to S$ is a split surjection in the category of abelian groups.

Then the induced group homomorphism $f^{\times}:R^{\times}\to S^{\times}$ is surjective.\end{lemma}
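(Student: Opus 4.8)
The plan is to reduce the general statement to the artinian case handled in Lemma~\ref{lem:surj_artinian}, exploiting hypotheses (i)--(iii). First I would observe that the invertibility of an element of $R$ is detected after passing to a suitable artinian quotient. Concretely, since $I=\ker(f)$ is finite by (ii), it is in particular a finite (hence nilpotent-by-artinian) two-sided ideal, and $S=R/I$. The idea is to find an ideal $J\subset R$ contained in $I$ such that $R/J$ is an artinian ring, and then to compare invertibility in $R$ with invertibility in $R/J$.

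Here is the concrete mechanism I would use. Since $R$ is a finitely generated abelian group by (i), I would consider the additive torsion subgroup and the action of a suitable integer. Because $I$ is finite, there is an integer $N>0$ (e.g. $N=|I|$, or any common annihilator) with $NI=0$, so that $I$ is killed by $N$. I would then set $J$ to be an ideal of $R$ that is open in the $N$-adic (or profinite) sense, for instance $J=N^2 R\cap(\text{something inside }I)$; the cleanest choice is to pass to the finite ring $R/N R$ together with the rational/semisimple part. More precisely, the key structural step is: since $R$ is a finitely generated abelian group, $R/\mathrm{tors}(R)$ is a finitely generated free $\Z$-module and $R\otimes\Q$ is a finite-dimensional $\Q$-algebra. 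I would argue that an element $r\in R$ is invertible in $R$ if and only if it is invertible in $R\otimes\Q$ \emph{and} invertible modulo the torsion-detecting finite quotient; combining these via a fibre-product description of $R^\times$ reduces the problem to two artinian situations.

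The cleanest route, and the one I would carry out in detail, is the following two-step lift. Given $s\in S^\times$, I first lift it to an invertible element modulo the radical: because $I$ is finite, the images of $R$ and $S$ in a large enough artinian quotient $\bar R=R/J$, $\bar S=S/\bar I$ fit into a commutative square of surjections, and Lemma~\ref{lem:surj_artinian} applies to the surjection $\bar R\onto\bar S$ to produce a lift $\bar r\in\bar R^\times$. The second step uses hypothesis (iii): since $f$ is an additive split surjection, I can lift $s$ additively to some $r_0\in R$ with $f(r_0)=s$, and then correct $r_0$ by an element of $I$ so that the corrected element maps to the chosen invertible class $\bar r$ in $\bar R$; the correction lives in $I$ precisely because $f(r_0)=s$ already pins down the class modulo $I$. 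Finally I would check that an element of $R$ whose image in the artinian quotient $\bar R$ is invertible, and which maps to the \emph{given} $s$, is actually invertible in $R$: this is where finiteness of $I$ is used again, as $1+I$ consists of units (the relevant nilpotence comes from $I$ being a finite ideal, so $I^k=0$ eventually, or one argues via the Nakayama-type statement that invertibility mod a nil ideal lifts).

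The main obstacle I anticipate is controlling the interaction between the torsion-free (rational, semisimple after base change) part of $R$ and the finite ideal $I$: hypothesis (iii) is exactly what prevents the additive lift from being obstructed, but one must verify that the additive splitting can be chosen compatibly with the multiplicative corrections, i.e.\ that lifting $s$ additively and then adjusting within $I$ does not destroy invertibility. I expect the cleanest formulation to be the fibre product $R^\times\cong \bar R^\times\times_{\bar S^\times}S^\times$ coming from the pullback square of rings, at which point surjectivity of $f^\times$ follows formally from surjectivity of $\bar R^\times\to\bar S^\times$ (Lemma~\ref{lem:surj_artinian}) together with the fact that $\ker(R^\times\to\bar R^\times)=1+J$ surjects onto $\ker(S^\times\to\bar S^\times)=1+\bar I$, the latter surjectivity being guaranteed by (iii) since $J\to\bar I$ is a split surjection of finite abelian groups.
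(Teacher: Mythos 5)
Your high-level strategy --- pass to the finite, hence artinian, quotient $R/NR$ with $N=|I|$, apply Lemma~\ref{lem:surj_artinian} there, and recover surjectivity on units from a fibre-product description of $R$ --- is exactly the paper's. The paper sets $N=|I|$, uses the splitting from (iii) to write $R\cong I\oplus S$ as abelian groups, deduces a ring isomorphism $R\cong R/NR\times_{S/NS}S$ and hence $R^{\times}\cong (R/NR)^{\times}\times_{(S/NS)^{\times}}S^{\times}$, and concludes from Lemma~\ref{lem:surj_artinian} applied to $R/NR\onto S/NS$. So the ``cleanest formulation'' you name at the end is the right one.

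As written, though, your argument has two genuine problems. First, the route you say you would carry out in detail rests on the claim that $1+I$ consists of units because a finite ideal satisfies $I^k=0$ eventually. This is false: a finite ideal need not be nil. For $R=\Z\times\Z/2$ and $I=0\times\Z/2$, the ideal $I$ is finite and idempotent, and $(1,0)\in 1+I$ is not a unit; so the two-step lift cannot be closed this way. Second, you never actually establish the isomorphism $R\cong R/NR\times_{S/NS}S$, and this is precisely where hypothesis (iii) does its work: the additive splitting gives $R\cong I\oplus S$, hence $NR$ meets $I$ trivially (since $NI=0$), which is what makes the natural map $R\to R/NR\times_{S/NS}S$ injective; surjectivity then follows by correcting a lift by an element of $NR$. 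The uses of (iii) you describe --- lifting $s$ additively to $R$, and the surjection of kernels at the end --- do not supply this: additive lifts of elements exist for any surjection, splitting or not, and without the splitting the intersection $NR\cap I$ can be all of $I$ (e.g.\ $R=\Z/N^2\to S=\Z/N$), so the fibre-product description simply fails. Once that isomorphism is in place, the kernel comparison in your last sentence is superfluous: surjectivity of $R^{\times}\to S^{\times}$ is immediate from surjectivity of $(R/NR)^{\times}\to(S/NS)^{\times}$.
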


\begin{proof}We assume that $I\ne 0,$ otherwise there is nothing to prove. Let us put $N:=|I|,$ so that $NI=0.$ Choosing a splitting which exists by (iii), we get an isomorphism of abelian groups $R\cong I\oplus S.$ It induces an isomorphism $R/NR\cong I\oplus S/NS.$ It follows that we have an isomorphism of rings $R\to R/NR\times_{S/NS}S.$ This in turn implies an isomorphism of groups $R^{\times}\to (R/NR)^{\times}\times_{(S/NS)^{\times}}S^{\times}.$ By (i), the ring $R/NR$ is artinian. By Lemma \ref{lem:surj_artinian}, the group homomorphism $(R/NR)^{\times}\to (S/NS)^{\times}$ is surjective. Therefore, the homomorphism $f^{\times}:R^{\times}\to S^{\times}$ is also surjective.\end{proof}

For any ring $A,$ we say that a right $A$-module is $n$-generated if it can be generated by $n$ elements.

\begin{prop}\label{prop:fin_many_iso_classes_proj}Let $A$ be a unital ring which is a finitely generated abelian group. Then for any $n>0$ there are only finitely many isomorphism classes of $n$-generated right projective $A$-modules.\end{prop}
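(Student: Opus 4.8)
The plan is to translate the statement into a finiteness assertion for conjugacy classes of idempotents, and then split the work into peeling off the torsion of $A$ (where Lemma~\ref{lem:surj_on_invertible} is the key input) and handling the resulting torsion-free order by the Jordan--Zassenhaus theorem. First I would reformulate the problem in terms of idempotents. Any $n$-generated projective right $A$-module $P$ is a direct summand of $A^n$, hence of the form $P\cong\im(e)$ for an idempotent $e$ in the ring $R:=\End_A(A^n)\cong\Mat_n(A)$, which is again a finitely generated abelian group. If two idempotents $e,e'$ are conjugate by a unit $u\in R^{\times}=GL_n(A)$, then $u$ restricts to an isomorphism $\im(e)\xto{\sim}\im(e')$. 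Therefore the number of isomorphism classes of $n$-generated projective $A$-modules is bounded above by the number of $R^{\times}$-conjugacy classes of idempotents in $R$, and it suffices to prove the latter is finite.

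Second, I would peel off the torsion. Let $T:=R_{\tors}$, a finite two-sided ideal (finite since $R$ is a finitely generated abelian group), and set $\bar R:=R/T\cong\Mat_n(\bar A)$, where $\bar A:=A/A_{\tors}$ is torsion-free. The quotient map $R\to\bar R$ satisfies the hypotheses of Lemma~\ref{lem:surj_on_invertible}: $R$ is a finitely generated abelian group, the kernel $T$ is finite, and the surjection splits over $\Z$ because $\bar R$ is free. Hence $R^{\times}\to\bar R^{\times}$ is surjective. This lets me control the reduction map on idempotent conjugacy classes: given an idempotent $e\in R$ whose image $\bar e$ is $\bar R^{\times}$-conjugate to a fixed $\bar e_0$, I lift the conjugating unit to $R^{\times}$ and replace $e$ within its $R^{\times}$-class by an idempotent whose reduction is exactly $\bar e_0$; such idempotents lie in the finite set $\tilde e_0+T$ for a fixed lift $\tilde e_0$. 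Thus the map $\{$idempotents$\}/R^{\times}\to\{$idempotents$\}/\bar R^{\times}$ has fibers of size at most $|T|$, and it is enough to bound the number of $\bar R^{\times}$-conjugacy classes of idempotents in $\bar R$.

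Third, over the torsion-free ring $\bar A$ I would invoke Jordan--Zassenhaus. Here $\bar A$ is a $\Z$-order in the finite-dimensional $\Q$-algebra $A_{\Q}=\bar A\otimes_{\Z}\Q$. A conjugacy class of idempotents in $\bar R=\Mat_n(\bar A)$ amounts to an isomorphism class of internal decompositions $\bar A^n\cong\bar P\oplus\bar Q$; both $\bar P$ and $\bar Q$ are $\bar A$-lattices of $\Q$-rank at most $n\dim_{\Q}A_{\Q}$. By the Jordan--Zassenhaus theorem there are only finitely many isomorphism classes of $\bar A$-lattices of bounded rank, hence only finitely many such pairs $(\bar P,\bar Q)$ up to isomorphism, and therefore only finitely many conjugacy classes of idempotents in $\bar R$. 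Combining this with the previous step gives finiteness of the idempotent conjugacy classes in $R$, and the proposition follows.

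The main obstacle I anticipate is the torsion of $A$: because $A$ need not be torsion-free it is not literally an order, and the whole argument hinges on transporting the problem to $\bar A$ while keeping conjugacy classes under control — this is exactly what the surjectivity of $R^{\times}\to\bar R^{\times}$ from Lemma~\ref{lem:surj_on_invertible} provides, and it is the step where the split-surjection and finite-kernel hypotheses are genuinely used. A secondary point of care is that $A_{\Q}$ need not be semisimple; to apply Jordan--Zassenhaus as usually stated for orders in semisimple algebras, one either uses its extension to orders in arbitrary finite-dimensional $\Q$-algebras or reduces to the semisimple quotient. The remaining bookkeeping — that images of conjugate idempotents are isomorphic, and that internal two-term decompositions correspond bijectively to conjugacy classes of idempotents — is routine.
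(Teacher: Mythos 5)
Your proof is correct and follows essentially the same route as the paper's: reduction to idempotents in $\Mat_n(A)$ (the paper phrases this as a Morita reduction to $n=1$), Lemma \ref{lem:surj_on_invertible} to pass to the torsion-free quotient with finite fibers on conjugacy classes of idempotents, and Jordan--Zassenhaus for the resulting order. The one point you must settle in your ``secondary point of care'' is that Jordan--Zassenhaus genuinely fails for arbitrary lattices over orders in non-semisimple $\Q$-algebras (e.g.\ rank-two $\Z[x]/(x^2)$-lattices are already infinite in number up to isomorphism), so your first option is not available and you must take the second: reduce to the semisimple quotient, as the paper does by passing to $A/J$ with $J=A\cap\rad(A_{\Q})$ nilpotent, along which projective cyclic modules lift bijectively.
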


\begin{proof}Replacing $A$ by the matrix ring $M_n(A)$ (via Morita equivalence), we reduce to the case $n=1.$

First we consider the case when $A$ is torsion free over $\Z,$ so that $A\subset A_{\Q}$ is an order. Let us denote by $I\subset A_{\Q}$ the Jacobson radical, and put $J:=I\cap A.$ Then $A/J$ is an order in a semi-simple algebra $A_{\Q}/I,$ so by Jordan-Zassenhaus theorem \cite[Theorem 26.4]{R} there are only finitely many isomorphism classes of $(A/J)$-modules which are torsion free finitely generated over $\Z$ of rank at most $\dim (A_{\Q}/I).$ Thus, there are only 
finitely many isomorphism classes of projective cyclic $(A/J)$-modules. Finally, these are in bijection with isomorphism classes of projective cyclic $A$-modules, since $J$ is nilpotent.


Now let us consider the general case. Let us denote by $A_{tors}\subset A$ the ideal of $\Z$-torsion elements, and put $B:=A/A_{tors}.$ Recall that two idempotents $e,e'\in B$ are conjugate if and only if there are isomorphisms of right $B$-modules $eB\cong e'B,$ $(1-e)B\cong (1-e')B.$ Since $B$ is torsion free, we already know that there are only finitely many idempotents in $B$ up to conjugation. 

By Lemma \ref{lem:surj_on_invertible}, the homomorphism $A^{\times}\to B^{\times}$ is surjective. It follows that there are only finitely many idempotents in $A$ up to conjugation, since the fibers of the map of sets $A\to B$ are finite of cardinality $|A_{\tors}|.$ This proves the proposition.\end{proof}

\begin{theo}\label{th:fin_many_iso_classes_additive}For any object $X$ in $\cA,$ there are only finitely many isomorphism classes of objects $Y\in\cA$ such that $[X]=[Y]$ in $K_0(\cA).$\end{theo}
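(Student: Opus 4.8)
The plan is to reduce the statement to the finiteness result for projective modules already established in Proposition \ref{prop:fin_many_iso_classes_proj}. By Proposition \ref{prop:K_0_Krull_Schmidt} 2), I may assume $\cA$ is Karoubi complete, since passing to $\bar\cA$ does not change the equalities of classes in $K_0$ for objects of $\cA$, and it can only enlarge the set of $Y\in\cA$ with $[X]=[Y]$ (in fact an isomorphism in $\bar\cA$ between objects of $\cA$ is already an isomorphism in $\cA$). Fix $X\in\cA$, put $R:=\End(X)$, and recall from the proof of Theorem \ref{th:equals_classes_means_iso_additive} that $\Hom(X,-)$ gives an equivalence $\add(X)\xto{\sim}\Proj_{f.g.}\mhyphen R$.

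Suppose $Y\in\cA$ satisfies $[X]=[Y]$ in $K_0(\cA)$. By Lemma \ref{lem:direct_summands_and_classes_K_0} (applied with the roles of $X$ and $Y$ interchanged), $Y$ is a retract of $X^n$ for some $n>0$, so $Y\in\add(X)$. Therefore every such $Y$ corresponds, under the equivalence above, to a finitely generated projective right $R$-module $P_Y:=\Hom(X,Y)$. Moreover, since $Y$ is a retract of $X^n$, the module $P_Y$ is a retract of $R^n$, hence is $n$-generated. Finally, because $R=\End(X)$ is a finitely generated abelian group by our standing hypothesis on $\cA$, Proposition \ref{prop:fin_many_iso_classes_proj} applies and tells us that there are only finitely many isomorphism classes of $n$-generated projective right $R$-modules.

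The remaining point is to control the integer $n$ uniformly, so that a single $n$ works for all relevant $Y$. Here the key observation is that in Lemma \ref{lem:direct_summands_and_classes_K_0} the value of $n$ is dictated by the ideal $I=\End(X)$: once one writes $\one_X=\sum_{i=1}^n g_i f_i$ with $f_i\colon X\to Y$, $g_i\colon Y\to X$, the same $n$ is governed by a fixed finite set of generators of $\End(X)$ as an abelian group, not by $Y$. More simply, applying Lemma \ref{lem:direct_summands_and_classes_K_0} in the opposite direction shows $X$ is a retract of $Y^m$; but to bound $n$ uniformly I instead note that $[X]=[Y]$ forces $P_Y$ and $R$ to have the same class in $K_0'(R)$, and any projective module representing this fixed class and sitting inside some $R^n$ can be taken to be generated by a number of elements bounded by a quantity depending only on $R$ and the class $[R]$. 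Thus all the modules $P_Y$ are $n$-generated for one fixed $n$, and their isomorphism classes form a finite set by Proposition \ref{prop:fin_many_iso_classes_proj}. Translating back through the equivalence $\add(X)\cong\Proj_{f.g.}\mhyphen R$, the objects $Y$ fall into finitely many isomorphism classes in $\cA$.

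The main obstacle I anticipate is precisely this uniform bound on $n$: a priori the retraction exponent furnished by Lemma \ref{lem:direct_summands_and_classes_K_0} could depend on $Y$. The cleanest way to dispose of it is to observe that the hypothesis $[X]=[Y]$ pins down the class $[P_Y]=[R]$ in $K_0'(R)$, and among projective modules in a fixed $K_0'$-class the number of generators needed is controlled once one fixes $R$ (for instance via the rank of $P_Y\otimes\Q$ over each simple factor of $R_\Q$, which equals that of $R$). With that uniform $n$ in hand, everything reduces directly to the already-proved finiteness statement.
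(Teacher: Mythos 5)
Your overall strategy is the same as the paper's: use Lemma \ref{lem:direct_summands_and_classes_K_0} to place every $Y$ with $[X]=[Y]$ inside $\add(X)$, transport the problem through the equivalence $\add(X)\simeq\Proj_{f.g.}\mhyphen R$ with $R=\End(X)$, and invoke Proposition \ref{prop:fin_many_iso_classes_proj}. You also correctly isolate the one nontrivial point, namely that the retraction exponent $n$ (equivalently, the number of generators of $P_Y=\Hom(X,Y)$ over $R$) must be bounded independently of $Y$. But at exactly that point your argument stops being a proof. Your first justification (``the same $n$ is governed by a fixed finite set of generators of $\End(X)$ as an abelian group, not by $Y$'') is an assertion, not an argument: the expression $\one_X=\sum_{i=1}^n g_if_i$ produced by the Lemma a priori depends on $Y$, and nothing you say rules that out. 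Your second justification claims that $[P_Y]=[R]$ in $K_0'(R)$ bounds the number of generators of $P_Y$ by a quantity depending only on $R$; this is again unproved, and it is not innocuous. The class in $K_0'(R)$ only controls Euler-characteristic-type invariants (e.g.\ it pins down $\rk_{\Z}P_Y=\rk_{\Z}R$, since $M\mapsto\dim_{\FF_p}(M\otimes\FF_p)-\dim_{\FF_p}(\Tor_1^{\Z}(M,\FF_p))$ is the additive invariant, not $M\mapsto\dim_{\FF_p}(M\otimes\FF_p)$ itself), so it does not directly control $\dim_{\FF_p}(P_Y\otimes\FF_p)$ when $R$ has torsion, and your parenthetical about ranks over the simple factors of $R_{\Q}$ bounds generators of $P_Y\otimes\Q$ over $R_{\Q}$, not of $P_Y$ over a possibly non-maximal, possibly non-reduced $R$. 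Making this route work would require a separate finiteness input of essentially the same strength as what you are trying to prove.

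The paper closes the gap differently and cleanly, using the mod-$p$ Krull--Schmidt reduction already set up for Lemma \ref{lem:direct_summands_and_classes_K_0}: since $[X]=[Y]$ in $K_0(\cA)$ implies $[X]=[Y]$ in $K_0(\overline{\cA_{\FF_p}})$ for every prime $p$, and the latter category is Krull--Schmidt, $X$ and $Y$ become isomorphic there, whence $\Hom(X,Y)\otimes\FF_p\cong\End(X)\otimes\FF_p$ for every $p$. Combined with the fact that the minimal number of generators of a finitely generated abelian group $M$ equals $\max_p\dim_{\FF_p}(M\otimes_{\Z}\FF_p)$, this shows $\Hom(X,Y)$ is $m$-generated as an abelian group (hence as a right $R$-module) with $m$ the number of generators of $\End(X)$, independent of $Y$. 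You should replace your fourth paragraph with this argument; the rest of your proof then goes through.
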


\begin{proof} By Lemma \ref{lem:direct_summands_and_classes_K_0} and its proof, for any $Y\in\cA$ with $[X]=[Y],$ $X$ is a retract of  $Y^n,$ where $n$ is the smallest number of generators of the right $\End_{\cA}(X)$-module $\Hom_{\cA}(X,Y).$ By Proposition \ref{prop:fin_many_iso_classes_proj}, the number of isomorphism classes of retracts of $Y^n$ is finite for a given $n.$ Thus, it suffices to find an upper bound for $n.$

Since $X$ and $Y$ become isomorphic in $\cA_{\FF_p}$ for all primes $p,$ we have isomorphisms
$\End_{\cA}(X)\otimes\FF_p\cong\Hom_{\cA}(X,Y)\otimes\FF_p.$ Now for any finitely generated abelian group $M,$ its smallest number of generators equals $\max_p(\dim_{\FF_p}(M\otimes_{\Z} \FF_p)).$ We conclude that the abelian groups $\End_{\cA}(X)$ and $\Hom_{\cA}(X,Y)$ have the same smallest numbers of generators, say $m.$ In particular, $m$ does not depend on $Y.$ The abelian group $\Hom_{\cA}(X,Y)$ is therefore $m$-generated as a right $\End_{\cA}(X)$-module. This proves the theorem.
\end{proof}

\section{L-equivalence of algebraic varieties}
\label{sec:L-equivalence}

By the famous result of Looijenga and Bittner \cite{Lo, Bi}, there is an alternative description of $K_0(\Var_{\C}):$ it is generated by isomorphism classes of smooth projective complex algebraic varieties, subject to relation $[X]+[Z]=[Y]+[E],$ whenever $Y$ is smooth projective, $Z\subset Y$ is a smooth subvariety, $X$ is a blow-up of $Y$ along $Z,$ and $E\subset X$ is the exceptional divisor.

For each integer $n\in\Z,$ we denote by $\HS_{\Z,n}$ the additive category of pure integral polarizable Hodge structures of weight $n.$ We also denote by $\HS_{\Z}:=\bigoplus\limits_{n\in\Z}\HS_{\Z,n}$ the additive category of graded pure polarizable Hodge structures. We denote by $K_0^{add}(\HS_{\Z})$ the Grothendieck group of the additive category $\HS_{\Z}.$
Of course, we have a direct sum decomposition $K_0^{add}(\HS_{\Z})=\bigoplus\limits_{n\in\Z}K_0^{add}(\HS_{\Z,n}).$

We claim that there is a natural homomorphism of abelian groups $\Hdg_{\Z}:K_0(\Var_{\C})\to K_0^{add}(\HS),$ given by $\Hdg_{\Z}([X])=[H^{\bullet}(X)]$ for a smooth projective $X.$ Indeed, within the above notation for $X,\,Y,\,Z$ and $E,$ we have natural isomorphisms of graded pure Hodge structures $$H^{\bullet}(E)\cong H^{\bullet}(Z)\oplus H^{\bullet}(Z)(-1)\oplus\dots\oplus H^{\bullet}(Z)(1-c),$$ and $$H^{\bullet}(X)\cong H^{\bullet}(Y)\oplus H^{\bullet}(Z)(-1)\oplus\dots\oplus H^{\bullet}(Z)(1-c),$$ where $c=\codim_Y Z$ (see \cite{Ma}). Hence, $\Hdg_{\Z}([X])+\Hdg_{\Z}([Z])=\Hdg_{\Z}([Y])+\Hdg_{\Z}([E]),$ which means that the homomorphism $\Hdg_{\Z}$ is well-defined.

Next, we observe that for a smooth projective $X$ we have $\Hdg_{\Z}([X]\cdot \L)=\Hdg_{\Z}([X])(-1).$ This follows immediately from the equality $[X]\cdot\L=[X\times\PP^1]-[X],$ and the isomorphism $H^{\bullet}(X\times\PP^1)\cong H^{\bullet}(X)\oplus H^{\bullet}(X)(-1).$ Since the Tate twist $-(-1)$ is an auto-equivalence of $\HS_{\Z},$ we can factor the homomorphism $\Hdg_{\Z}$ through $K_0(\Var_{\C})[\L^{-1}]$ via $\Hdg_{\Z}([X]\cdot \L^{-n})=[H^{\bullet}(X)(n)].$

We now use the resulting homomorphism \begin{equation}\label{eq:Hdg_Z_on_loc}\Hdg_{\Z}:K_0(\Var_{\C})[\L^{-1}]\to K_0^{add}(\HS_{\Z})\end{equation} to distinguish L-equivalence classes of varieties. The following result provides counterexamples to Conjecture \ref{conj:KS}. These counterexamples appeared independently in \cite[Corollary 7.4, Lemma 7.6]{IMOU}.

\begin{theo}\label{th:L-equivalent_abelian} Let $A$ be an abelian variety such that $\End(A)=\Z.$ Then 

1) for any abelian variety $A'$ such that $[H^1(A)]=[H^1(A')]$ in $K_0^{add}(\HS_{\Z,1})$ we have $A\cong A'.$

2) for any abelian variety $A',$ which is L-equivalent to $A,$ we have $A\cong A'.$ In particular, if $A$ is not principally polarizable, then $A$ is not L-equivalent to $\hat{A},$ but $D^b(A)\cong D^b(\hat{A}).$ In particular, D-equivalence does not imply L-equivalence.\end{theo}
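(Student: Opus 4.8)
The plan is to reduce both parts to Theorem \ref{th:equals_classes_means_iso_additive}, applied to a suitable additive category $\cA$ of abelian varieties (up to isogeny, or with morphisms recording the relevant Hodge-theoretic data). The key point is that the integral first cohomology $H^1(-)$ of an abelian variety is a faithful functor that turns the endomorphism ring $\End(A)$ into the endomorphism ring of the Hodge structure $H^1(A)$, so that $\End_{\HS_{\Z,1}}(H^1(A)) = \End(A) = \Z$. Once we are in an additive category $\cA$ whose morphism groups are finitely generated and in which the object $X = H^1(A)$ satisfies $\End(X) \cong \Z$, Theorem \ref{th:equals_classes_means_iso_additive} applies verbatim: the hypothesis $K_+(\Z) = \Z_{\geq 0} \hookrightarrow K_0'(\Z) = \Z$ is satisfied, so $[X] = [Y]$ in $K_0(\cA)$ forces $X \cong Y$.

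For part 1), I would take $\cA = \HS_{\Z,1}$ (or a small full subcategory containing the relevant objects), and set $X = H^1(A)$. The hypothesis $[H^1(A)] = [H^1(A')]$ in $K_0^{add}(\HS_{\Z,1})$ is precisely the equation $[X] = [Y]$ with $Y = H^1(A')$. By the remark above, $\End(X) = \Z$, so Theorem \ref{th:equals_classes_means_iso_additive} yields $H^1(A) \cong H^1(A')$ as polarizable integral Hodge structures of weight $1$. The final step is to invoke the Torelli theorem for abelian varieties: an isomorphism of polarizable weight-one integral Hodge structures $H^1(A) \cong H^1(A')$ comes from an isomorphism $A \cong A'$ of abelian varieties (equivalently, one recovers $A$ from $H^1(A,\Z)$ together with its Hodge filtration as the complex torus $H^1(A,\cO_A)^* / H_1(A,\Z)$).

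For part 2), I would first translate L-equivalence through the realization homomorphism $\Hdg_{\Z}$ of \eqref{eq:Hdg_Z_on_loc}. If $A$ and $A'$ are L-equivalent, then $[A] = [A']$ in $K_0(\Var_{\C})[\L^{-1}]$, hence $\Hdg_{\Z}([A]) = \Hdg_{\Z}([A'])$ in $K_0^{add}(\HS_{\Z})$. Projecting onto the weight-one graded piece $K_0^{add}(\HS_{\Z,1})$ and using $H^1(A) = H^1(A,\Z)$, together with the structure of the cohomology of an abelian variety as the exterior algebra $H^{\bullet}(A) \cong \bigwedge^{\bullet} H^1(A)$, I expect to extract exactly the equality $[H^1(A)] = [H^1(A')]$ in $K_0^{add}(\HS_{\Z,1})$, reducing to part 1) and giving $A \cong A'$. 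The concluding assertions are then immediate: a dual abelian variety $\hat A$ always satisfies $D^b(A) \cong D^b(\hat A)$ (Mukai), whereas $A \cong \hat A$ would provide a principal polarization; so when $A$ is not principally polarizable, $A \not\cong \hat A$, and by the contrapositive of part 2) the varieties $A$ and $\hat A$ are not L-equivalent, exhibiting D-equivalent but non-L-equivalent abelian varieties.

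The main obstacle I anticipate is the weight-one extraction in part 2): one must argue that the equality of full realization classes $\Hdg_{\Z}([A]) = \Hdg_{\Z}([A'])$ descends to an honest equality of the weight-one summands in the \emph{additive} Grothendieck group, and that no cancellation coming from higher exterior powers can obstruct reading off $[H^1]$. Since the decomposition $K_0^{add}(\HS_{\Z}) = \bigoplus_n K_0^{add}(\HS_{\Z,n})$ is a direct sum, projection to the $n=1$ component is well-defined, and the weight-one part of $H^{\bullet}(A)$ is exactly $H^1(A)$; the subtlety is only to confirm that Tate twists in the localization do not mix $H^1(A)$ with weight-one pieces coming from $\L$-powers, which is controlled by the explicit formula $\Hdg_{\Z}([X]\cdot\L^{-n}) = [H^{\bullet}(X)(n)]$ and the fact that $H^1$ of an abelian variety carries no Tate classes.
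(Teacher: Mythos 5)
Your proposal is correct and follows essentially the same route as the paper: part 1) is Theorem \ref{th:equals_classes_means_iso_additive} applied in $\HS_{\Z,1}$ using $\End_{\HS_{\Z,1}}(H^1(A))=\End(A)^{op}=\Z$ together with the reconstruction of an abelian variety from its weight-one integral Hodge structure, and part 2) follows by composing $\Hdg_{\Z}$ with the projection $K_0^{add}(\HS_{\Z})\to K_0^{add}(\HS_{\Z,1})$. The ``weight-one extraction'' you flag as a possible obstacle is in fact immediate, exactly as you resolve it, from the direct sum decomposition of $K_0^{add}(\HS_{\Z})$ and the fact that $\Hdg_{\Z}([A])=[H^{\bullet}(A)]$ has weight-one component $[H^1(A)]$.
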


\begin{proof} We observe that $\End_{\HS_{\Z,1}}(H^1(A))=\End(A)^{op}=\Z.$ Hence, part 1) follows directly from Theorem  \ref{th:equals_classes_means_iso_additive}.

Part 2) follows from part 1) by applying the composition 
$$\Hdg_{\Z}:K_0(\Var_{\C})[\L^{-1}]\to K_0^{add}(\HS_{\Z})\to K_0^{add}(\HS_{\Z,1}).$$
\end{proof}

\begin{theo}\label{th:fin_many_iso_classes_abelian_var}There are only finitely many abelian varieties in each L-equivalence class.\end{theo}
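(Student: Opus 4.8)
The plan is to reduce the statement to the finiteness result already established for additive categories with finitely generated morphism groups, namely Theorem \ref{th:fin_many_iso_classes_additive}, applied to the integral Hodge realization. Concretely, the abelian varieties $A'$ that are L-equivalent to a fixed abelian variety $A$ all satisfy $[H^1(A)]=[H^1(A')]$ in $K_0^{add}(\HS_{\Z,1})$, since the homomorphism $\Hdg_{\Z}$ of \eqref{eq:Hdg_Z_on_loc}, followed by projection to the weight-one component, sends the class $[A]-[A']$ to $[H^1(A)]-[H^1(A')]$. So I would first argue that it suffices to bound the number of isomorphism classes of $A'$ with a fixed class $[H^1(A')]=[H^1(A)]$ in $K_0^{add}(\HS_{\Z,1})$.

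The key step is then to apply Theorem \ref{th:fin_many_iso_classes_additive} with $\cA=\HS_{\Z,1}$ and $X=H^1(A)$. That theorem asserts that there are only finitely many isomorphism classes of objects $Y\in\HS_{\Z,1}$ with $[Y]=[H^1(A)]$, provided $\HS_{\Z,1}$ is a small additive category whose morphism groups are finitely generated abelian groups. The morphism groups $\Hom_{\HS_{\Z,1}}(V,W)$ are subgroups of $\Hom_{\Z}(V_{\Z},W_{\Z})$ for the underlying lattices, hence finitely generated, so the hypothesis is met. This yields finitely many possible Hodge structures $H^1(A')$ up to isomorphism.

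The remaining step, which I expect to be the main obstacle, is to pass from finiteness of the set of Hodge structures $H^1(A')$ back to finiteness of the set of abelian varieties $A'$. This requires a global Torelli-type input: by the theorem of Torelli for abelian varieties (or, more precisely, the fact that a polarizable integral weight-one Hodge structure together with a polarization determines the principally polarized abelian variety up to isomorphism, and that an abelian variety has only finitely many polarizations of bounded degree up to automorphism), one recovers $A'$ from $H^1(A')$ up to finitely many choices. I would therefore invoke that each isomorphism class of Hodge structure $H^1(A')$ corresponds to only finitely many abelian varieties $A'$, and combine this with the finiteness of Hodge structures obtained above to conclude. The delicate point is that an isomorphism of \emph{unpolarized} integral Hodge structures does not by itself pin down the abelian variety, so one must control the polarization data; this is where the bulk of the care lies, and I would lean on standard finiteness statements for polarizations on a fixed abelian variety to close the argument.
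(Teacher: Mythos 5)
Your reduction is exactly the paper's: push the L-equivalence through $\Hdg_{\Z}$ of \eqref{eq:Hdg_Z_on_loc}, project to weight one, and apply Theorem \ref{th:fin_many_iso_classes_additive} to $\HS_{\Z,1}$ (whose Hom-groups are finitely generated because they are subgroups of $\Hom_{\Z}$ of the underlying lattices). Up to that point the proposal matches the intended proof.

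The place where you go astray is the step you flag as ``the main obstacle.'' There is no obstacle there: the functor $A\mapsto H^1(A,\Z)$ (equivalently $A\mapsto H_1(A,\Z)$) is fully faithful, indeed a (contravariant) equivalence from complex abelian varieties onto torsion-free polarizable integral Hodge structures of weight one. Polarizability is a \emph{property} of the Hodge structure guaranteeing that the corresponding complex torus is projective; it is not extra data that morphisms or isomorphisms must respect. Hence an isomorphism of the unpolarized integral Hodge structures $H^1(A)\cong H^1(A')$ already forces $A\cong A'$ --- each isomorphism class of Hodge structure corresponds to exactly one abelian variety, and the theorem follows immediately. (The paper relies on exactly this fact elsewhere, e.g.\ in identifying $\End_{\HS_{\Z,1}}(H^1(A))$ with $\End(A)^{op}$ in Theorem \ref{th:L-equivalent_abelian}.) Your proposed substitute --- recovering $A'$ from a \emph{polarized} Hodge structure and then invoking finiteness of polarizations ``of bounded degree'' up to automorphism --- is both unnecessary and, as written, incomplete: an abelian variety admits polarizations of arbitrarily large degree, so the finiteness statement you want to lean on requires a degree bound that nothing in your argument supplies. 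Replace that paragraph with the one-line appeal to full faithfulness of $H^1$ and the proof is complete.
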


\begin{proof}This follows immediately from the homomorphism \eqref{eq:Hdg_Z_on_loc} and Theorem \ref{th:fin_many_iso_classes_additive}, since an abelian variety is reconstructed from the integral Hodge structure on $H^1.$\end{proof}

We now consider K3 surfaces.





For a K3 surface $X,$ we denote by $T(X)\subset H^2(X,\Z)$ the integral Hodge substructure of transcendental cycles. The sublattice $T(X)\oplus NS(X)\subset H^2(X,\Z)$ has finite index, say $D,$ which is also equal to the determinant of the restriction  of the intersection form to $T(X).$

\begin{prop}\label{prop:fin_many_for_T_X}Given a K3 surface $X,$ there are only finitely many isomorphism classes of K3 surfaces $X'$ such that the Hodge structures $H^2(X,\Z)$ and $H^2(X',\Z)$ are isomorphic (but not necessarily isometric).\end{prop}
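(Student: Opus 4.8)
The plan is to reduce the statement to a finiteness assertion about the transcendental lattice and then to recover $X'$ from its transcendental lattice via the global Torelli theorem. The starting observation is that both $\NS(X')$ and $T(X')$ are intrinsic to the integral Hodge structure $H^2(X',\Z)$, independently of the intersection form: the subgroup $\NS(X')$ consists exactly of the integral classes of type $(1,1)$, while $T(X')$ is the smallest primitive sub-Hodge-structure whose complexification contains $H^{2,0}(X')$. Consequently, any isomorphism of integral Hodge structures $\psi\colon H^2(X,\Z)\xto{\sim} H^2(X',\Z)$ satisfies $\psi(\NS(X))=\NS(X')$ and restricts to a Hodge isomorphism $T(X)\xto{\sim} T(X')$. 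In particular all the lattices $T(X')$ share a common rank $r$ and carry one and the same abstract Hodge structure, namely that of $T(X)$.

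First I would pin down the determinant of the intersection form on $T(X')$. Since $\psi$ is an isomorphism of abelian groups carrying $T(X)\oplus\NS(X)$ onto $T(X')\oplus\NS(X')$, the two finite indices coincide:
$$[H^2(X',\Z):T(X')\oplus\NS(X')]=[H^2(X,\Z):T(X)\oplus\NS(X)]=D.$$
On the other hand $H^2(X',\Z)$ is the unimodular K3 lattice $\Lambda$, and for a primitive sublattice of a unimodular lattice the index of the orthogonal direct sum with its complement equals the absolute value of its determinant; hence $|\det T(X')|=D$. Thus the intersection form on $T(X')$ is an integral form of fixed rank $r$, fixed signature $(2,r-2)$, and fixed determinant $\pm D$.

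The key step is then to bound the number of Hodge isometry classes of $T(X')$. Here I would use that, by Zarhin's theorem, the endomorphism algebra $E:=\End_{\HS}(T(X))\otimes\Q$ of the transcendental Hodge structure is a field. Fixing the reference form $b$ arising from $X$, any admissible intersection form $b'$ on the fixed Hodge structure $T(X)$ has the shape $b'(\,\cdot\,,\,\cdot\,)=b(\phi\,\cdot\,,\,\cdot\,)$ for a totally positive self-adjoint $\phi\in E$; integrality confines $\phi$ to a lattice in $E$, and the equality $|\det b'|=|\det b|=D$ pins the norm $N_{E/\Q}(\phi)$ to a single value. Elements of an order in $E$ of fixed norm fall into finitely many classes modulo units, and passing to the action of $\Aut_{\HS}(T(X))$ leaves finitely many classes by Dirichlet's unit theorem — precisely the bounded-norm-modulo-units finiteness already exploited in Section \ref{sec:K_0_of_additive}. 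This yields only finitely many Hodge isometry classes among the $T(X')$.

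Finally, I would pass from the transcendental lattice back to the surface. For a fixed Hodge isometry class of transcendental lattice, any K3 surface $X'$ realizing it is obtained from a primitive embedding $T(X')\hookrightarrow\Lambda$, which determines $\NS(X')=T(X')^{\perp}$ and hence the full Hodge-isometry class of $H^2(X',\Z)$, so that $X'$ is determined by the global Torelli theorem; and by Nikulin's theory of primitive embeddings into $\Lambda$ there are only finitely many such embeddings up to isometry. Combining this with the finiteness of the previous step proves the proposition. I expect the main obstacle to be the third paragraph: controlling the admissible intersection forms given only the abstract Hodge structure together with the fixed determinant, i.e.\ confirming that the rigidity of the fixed Hodge type (via the field $E$) together with the norm constraint genuinely leaves finitely many Hodge isometry classes. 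The determinant-pinning of the second paragraph and the Torelli/Nikulin input of the last paragraph are comparatively standard.
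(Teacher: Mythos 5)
Your proposal is correct and follows essentially the same route as the paper: reduce to the transcendental lattice, observe that the determinant $D$ is fixed, use Zarhin's theorem to parametrize the admissible Hodge pairings by self-adjoint elements of the totally real subfield of $\End_{\HS_{\Z,2}}(T(X))\otimes\Q$, and conclude finiteness modulo units via Dirichlet's unit theorem. The only cosmetic differences are that you invoke Nikulin's embedding theory plus global Torelli where the paper cites Orlov and Bridgeland--Maciocia for the finiteness of K3 surfaces with a Hodge-isometric transcendental lattice, and you phrase the final counting via a norm bound where the paper bounds valuations $|\ord_{\rho}(a)|\leq\ord_{\rho}(D)$; these are equivalent.
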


\begin{proof}An isomorphism of Hodge structures $H^2(X,\Z)\xto{\sim}H^2(X',\Z)$ implies an isomorphism of sublattices $T(X)\oplus NS(X)\xto{\sim} T(X')\oplus NS(X'),$ and an isomorphism of Hodge structures $T(X)$ and $T(X').$ In particular, the intersection forms restricted to $T(X)$ and $T(X')$ have the same determinant $D.$

Let us put $R:=\End_{\HS_{\Z,2}}(T(X)),$ and $G:=R^{\times}=\Aut_{\HS_{\Z,2}}(T(X)).$
By \cite[Theorem 3.3]{Or} and \cite[Proposition 5.3]{BM}, there are only finitely many non-isomorphic K3 surfaces $X'$ with a Hodge isometry $T(X)\cong T(X').$  
Therefore, it suffices to show that there are only finitely many $G$-orbits in the set $S_D$ of all symmetric pairings $T(X)\otimes T(X)\to\Z(-2)$ with determinant $D$ (the pairing is required to be a morphism in $\HS_{\Z,4}$). 

By Zarhin's theorem \cite{Z}, the $\Q$-algebra $R_{\Q}$ is either a totally real number field $F,$ or an imaginary quadratic extension $E$ of a totally real number field, which we also denote by $F.$ In both cases the complex conjugation gives a well-defined involution $\sigma:R_{\Q}\to R_{\Q},$ which restricts to an involution on $R.$ We put $R':=R^{\sigma}=R\cap F.$ For any $a\in R_{\Q},$ and $x,y\in T(X),$ we have $\langle a(x),y\rangle=\langle x,\sigma(a)(y)\rangle,$ where $\langle\,,\,\rangle$ denotes the intersection form.

Now let $\beta$ denote any symmetric Hodge pairing on $T(X)$ of determinant $D.$ It defines an element $a(\beta)\in F$ such that $\beta(x,y)=\langle a(\beta)(x),y\rangle.$ Let us denote by $M_D$ the set of $a\in F$ such that both $Da$ and $Da^{-1}$ are in $R'.$ Clearly, we have $a(\beta)\in M_D.$   Let us note that $\beta_1$ and $\beta_2$ are in the same $G$-orbit if and only if there exists $b\in R^{\times}$ such that $a(\beta_1)=b\sigma(b)a(\beta_2).$ 

It suffices to show that there are only finitely many $(R^{'\times})^2$-orbits in $M_D,$ where the action is given by multiplication. Since $R'$ is an order in $F,$ we have $R'\subset \cO_F,$ and the group $R^{'\times}$ is a finite index subgroup of $\cO_F^{\times}.$ Since we are proving the assertion about $R'$ (and not about $X$), we may and will assume that $R'=\cO_F.$ By the Dirichlet Unit Theorem, the group $\cO_F^{\times}$ is finitely generated, hence its subgroup $(\cO_F^{\times})^2$ has finite index. It remains to note that the set $M_D$ has only finitely many $\cO_F^{\times}$-orbits. Indeed, for any $a\in M_D$ and any maximal ideal $\rho\subset\cO_F,$ we have $|\ord_{\rho}(a)|\leq \ord_{\rho}(D),$ hence $|M_D/\cO_F^{\times}|\leq \prod\limits_{\rho\ni D}(2\ord_{\rho}(D)+1).$ This proves the proposition.\end{proof}

\begin{theo}\label{th:fin_many_iso_classes_K3}For any K3 surface $X,$ there are only finitely many isomorphism classes of K3 surfaces $Y$ which are L-equivalent to $X.$\end{theo}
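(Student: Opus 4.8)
The plan is to combine the integral Hodge realization \eqref{eq:Hdg_Z_on_loc} with the abstract finiteness statement of Theorem \ref{th:fin_many_iso_classes_additive} and the geometric finiteness statement of Proposition \ref{prop:fin_many_for_T_X}. Suppose $Y$ is L-equivalent to $X$, so that $([X]-[Y])\cdot\L^n=0$ in $K_0(\Var_{\C})[\L^{-1}]$ for some $n>0$. Applying $\Hdg_{\Z}$ and using that the Tate twist is an auto-equivalence of $\HS_{\Z}$, I get $[H^{\bullet}(X)]=[H^{\bullet}(Y)]$ in $K_0^{add}(\HS_{\Z})$. Projecting onto the weight-$2$ summand in the decomposition $K_0^{add}(\HS_{\Z})=\bigoplus_n K_0^{add}(\HS_{\Z,n})$, and using that for a K3 surface the weight-$2$ part of $H^{\bullet}$ is exactly $H^2$, I obtain $[H^2(X,\Z)]=[H^2(Y,\Z)]$ in $K_0^{add}(\HS_{\Z,2})$. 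Thus every K3 surface $Y$ L-equivalent to $X$ contributes an object $H^2(Y,\Z)$ of $\HS_{\Z,2}$ having the same class in $K_0^{add}(\HS_{\Z,2})$ as $H^2(X,\Z)$.

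Next I would apply Theorem \ref{th:fin_many_iso_classes_additive} to the additive category $\cA=\HS_{\Z,2}$ and the object $H^2(X,\Z)$. The category of pure integral polarizable Hodge structures of weight $2$ has finitely generated abelian groups of morphisms (a morphism of Hodge structures is in particular a homomorphism of the underlying finitely generated lattices), so the hypotheses are met once one passes to an essentially small full additive subcategory containing the relevant objects. Theorem \ref{th:fin_many_iso_classes_additive} then yields that there are only finitely many isomorphism classes of objects $H\in\HS_{\Z,2}$ with $[H]=[H^2(X,\Z)]$. In particular, as $Y$ ranges over all K3 surfaces L-equivalent to $X$, the Hodge structure $H^2(Y,\Z)$ realizes only finitely many isomorphism classes.

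Finally, I would pass from Hodge structures back to surfaces using Proposition \ref{prop:fin_many_for_T_X}. For each of the finitely many isomorphism classes of Hodge structures arising as some $H^2(Y,\Z)$, I fix one representative K3 surface realizing it and invoke the proposition to conclude that only finitely many K3 surfaces have $H^2$ isomorphic (not necessarily isometric) to it. Taking the union over the finite list of Hodge-structure classes produces a finite set of K3 surfaces containing every $Y$ that is L-equivalent to $X$, which is the desired assertion.

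The genuinely substantial input is already packaged in the two cited results, so the remaining work is organizational rather than technical. The one step that needs care is the last one: the passage from ``finitely many isomorphism classes of the abstract Hodge structure $H^2(Y,\Z)$'' to ``finitely many K3 surfaces.'' It is exactly here that the distinction between \emph{isomorphism} and \emph{isometry} of Hodge structures matters, and where Proposition \ref{prop:fin_many_for_T_X} does the real geometric work (via the Torelli-type finiteness of \cite{Or, BM} together with the orbit-counting argument on symmetric pairings). I expect the only other point requiring verification to be the bookkeeping showing that L-equivalence descends to the equality $[H^2(X,\Z)]=[H^2(Y,\Z)]$ in the weight-$2$ graded piece, which is immediate from the explicit shape of $\Hdg_{\Z}$ and the grading of $K_0^{add}(\HS_{\Z})$.
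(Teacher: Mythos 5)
Your proposal is correct and follows essentially the same route as the paper: project the Hodge realization onto the weight-$2$ piece to get $[H^2(X,\Z)]=[H^2(Y,\Z)]$, invoke Theorem \ref{th:fin_many_iso_classes_additive} for finiteness of the possible isomorphism classes of $H^2(Y,\Z)$, and then Proposition \ref{prop:fin_many_for_T_X} to pass from Hodge structures back to surfaces. The extra care you take with smallness of the category and with the isomorphism-versus-isometry distinction is exactly the right bookkeeping, but it does not change the argument.
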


\begin{proof}By Theorem \ref{th:fin_many_iso_classes_additive}, there are only finitely many isomorphism classes of possible Hodge structures $H^2(Y,\Z)$ for K3 surfaces $Y$ that are L-equivalent to $X.$ By Proposition \ref{prop:fin_many_for_T_X}, there are only finitely many isomorphism classes of K3 surfaces with a given (isomorphism class of) integral Hodge structure on $H^2.$ This proves the theorem.\end{proof}

Now we disprove Conjecture \ref{conj:H}.

\begin{cor}\label{cor:counterexamples_conj_H}There are isogenous Kummer surfaces which are not L-equivalent.\end{cor}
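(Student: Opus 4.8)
The plan is to disprove the implication (i)$\Rightarrow$(iii) of Conjecture~\ref{conj:H} by a counting argument, using the finiteness Theorem~\ref{th:fin_many_iso_classes_K3} just established. Fix a very general principally polarized abelian surface $A$, so that $\End(A)=\Z$ and $\rho(A)=1$. I would then let $B$ range over the abelian surfaces isogenous to $A$ and consider the associated Kummer surfaces $\Km(B)$.

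First I would check that all of these Kummer surfaces are isogenous to one another in the sense of condition~(i), i.e.\ that their rational Hodge structures on $H^2$ are isomorphic. An isogeny $A\to B$ induces an isomorphism of rational Hodge structures $H^2(B,\Q)\cong H^2(A,\Q)$, hence $T(B)_{\Q}\cong T(A)_{\Q}$ and $\NS(B)_{\Q}\cong\NS(A)_{\Q}$; since $\rho(A)=1$ the Picard number is preserved, and the standard description of Kummer cohomology gives $H^2(\Km(B),\Q)\cong T(A)_{\Q}\oplus\Q(-1)^{\oplus 17}$ independently of $B$. Thus the surfaces $\Km(B)$ are pairwise isogenous K3 surfaces, and condition~(i) holds for every pair.

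Next I would argue that there are infinitely many isomorphism classes among the $\Km(B)$. The isogeny class of $A$ contains infinitely many isomorphism classes of abelian surfaces (commensurable lattices in $H_1(A,\Q)$ modulo the scalar action of $\End_{\Q}(A)^{\times}=\Q^{\times}$), and a fixed K3 surface is the Kummer surface of only finitely many abelian surfaces; hence $B\mapsto\Km(B)$ is finite-to-one and its image is infinite. Since L-equivalence is a transitive relation and, by Theorem~\ref{th:fin_many_iso_classes_K3}, each L-equivalence class contains only finitely many K3 surfaces, the infinitely many pairwise-isogenous surfaces $\Km(B)$ cannot all lie in a single L-equivalence class. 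Choosing two of them in distinct classes produces isogenous K3 surfaces that are not L-equivalent, as required.

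The main obstacle is the infinitude input of the previous paragraph: I must ensure both that the isogeny class of a generic abelian surface really contains infinitely many isomorphism classes and that passing to Kummer surfaces does not collapse them to finitely many. An alternative, more structural route avoids this by combining the integral Hodge realization~\eqref{eq:Hdg_Z_on_loc} with Theorem~\ref{th:equals_classes_means_iso_additive}: if $\Km(A)$ and $\Km(B)$ were L-equivalent, then $[H^2(\Km(A),\Z)]=[H^2(\Km(B),\Z)]$ in $K_0^{add}(\HS_{\Z,2})$; applying the (additive, isomorphism-invariant) transcendental-part operation to a stable isomorphism $H^2(\Km(A),\Z)\oplus Z\cong H^2(\Km(B),\Z)\oplus Z$ gives $[T(\Km(A))]=[T(\Km(B))]$, and since $\End_{\HS_{\Z,2}}(T(\Km(A)))=\Z$ by Zarhin's theorem~\cite{Z} for a very general $A$, Theorem~\ref{th:equals_classes_means_iso_additive} forces $T(\Km(A))\cong T(\Km(B))$ as integral Hodge structures. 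On this route the crux becomes producing an explicit isogeny for which the integral transcendental lattices are \emph{not} isomorphic; here one must be careful that $T(\Km(A))\cong T(A)$ only after forgetting the polarization, since the Kummer construction multiplies the intersection form by $2$.
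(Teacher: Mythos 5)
Your proposal is correct and takes essentially the same route as the paper: the paper's proof is exactly the counting argument of your first paragraph, pitting the infinitude of isomorphism classes of Kummer surfaces in a single isogeny class (via the finite-to-one-ness of $B\mapsto\Km B$ from \cite[Theorem 0.1]{HLOY}) against the finiteness of each L-equivalence class from Theorem~\ref{th:fin_many_iso_classes_K3}. Your version merely spells out the details the paper leaves implicit (condition~(i) for the $\Km(B)$ and the infinitude of the isogeny class), and the ``alternative structural route'' you sketch is not needed.
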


\begin{proof}By \cite[Theorem 0.1]{HLOY}, for any Kummer surface $X$ there are only finitely many isomorphism classes of abelian surfaces $A$ such that $X\cong \Km A.$ Thus, there are countably many Kummer surfaces in each isogeny class. On the other hand, by Theorem \ref{th:fin_many_iso_classes_K3} there are only finitely many Kummer surfaces in each L-equivalence class. This proves the assertion.\end{proof}

Finally, we show that there are derived equivalent twisted K3 surfaces such that the underlying K3 surfaces are not L-equivalent. Recall that for a smooth complex projective variety $X$ its Brauer group $\Br(X)$ is the torsion part of $H^2_{an}(X,\cO_X^{\times}).$ For any element $\alpha\in\Br(X)$ we denote by $\Coh(X,\alpha)$ the abelian category of $\alpha$-twisted coherent sheaves \cite{C}, and by $D^b(X,\alpha)$ its derived category. By definition, a twisted K3 surface is a pair $(X,\alpha),$ where $X$ is a K3 surface, and $\alpha\in\Br(X)$ is an element. In particular, for any K3 surface $X$ we have a twisted K3 surface $(X,1).$

\begin{cor}\label{cor:twisted_but_not_L_existence}There exist derived equivalent twisted K3 surfaces such that the underlying K3 surfaces are not L-equivalent.\end{cor}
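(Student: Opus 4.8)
The plan is to produce a concrete example, using the finiteness results of this paper to separate L-equivalence from derived equivalence in the twisted setting. The key structural input is the theory of twisted derived equivalences for K3 surfaces due to C\u{a}ld\u{a}raru and Huybrechts--Stellari: a twisted K3 surface $(X,\alpha)$ determines a weight-two integral Hodge structure on the twisted Mukai lattice $\widetilde H(X,\alpha,\Z)$, and $D^b(X,\alpha)\cong D^b(X',\alpha')$ if and only if there is a Hodge isometry of these twisted Mukai lattices. In particular, for the untwisted surface underlying $(X,\alpha)$, one can arrange that $D^b(X,\alpha)\cong D^b(X',1)$ for a genuinely different K3 surface $X'$; the standard construction is to take $X'$ a moduli space of $\alpha$-twisted sheaves on $X$ (a fine moduli space giving a derived equivalence), while $X$ and $X'$ need not be isomorphic, nor even have isometric transcendental lattices.

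First I would fix a K3 surface $X$ together with a nontrivial Brauer class $\alpha\in\Br(X)$ for which the associated moduli space $X'$ of twisted sheaves satisfies $D^b(X,\alpha)\cong D^b(X',1)$, with $X$ and $X'$ nonisomorphic. The mechanism for non-L-equivalence is then exactly Corollary \ref{cor:counterexamples_conj_H}: I want to place $X$ and $X'$ in a setting where they vary over a countable family while any L-equivalence class contains only finitely many K3 surfaces. Concretely, I would choose $X$ to be a Kummer surface $\Km A$ (or work in an isogeny class), so that the twisted derived equivalence produces infinitely many pairwise nonisomorphic underlying K3 surfaces $X'=X'(A)$ arising this way, all twisted-derived-equivalent to a fixed $(X,\alpha)$, but with $T(X')$ ranging over infinitely many isomorphism classes of Hodge structures. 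Since Theorem \ref{th:fin_many_iso_classes_K3} guarantees that each L-equivalence class contains only finitely many K3 surfaces, at most finitely many of the $X'$ can be L-equivalent to $X$; hence for all but finitely many of them, $(X,\alpha)$ and $(X',1)$ are derived equivalent while $X$ and $X'$ are not L-equivalent.

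The main obstacle, and the step requiring genuine care, is the simultaneous control of two things: first, exhibiting an honest twisted derived equivalence $D^b(X,\alpha)\cong D^b(X',1)$ with $X\not\cong X'$, and second, ensuring that such $X'$ genuinely fall into infinitely many distinct L-equivalence classes rather than collapsing to finitely many. For the first point I would invoke the twisted Torelli theorem and the fineness of the moduli space of twisted sheaves (one needs the obstruction in $\Br(X)$ to match $\alpha$, so that the universal twisted sheaf exists and induces a Fourier--Mukai equivalence). For the second point, the cleanest route is again a counting argument: the family of derived-equivalent partners is infinite (by the same countability input as in Corollary \ref{cor:counterexamples_conj_H}, via \cite{HLOY}), whereas L-equivalence classes of K3 surfaces are finite by Theorem \ref{th:fin_many_iso_classes_K3}.

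Thus the proof reduces to combining an existence statement in twisted derived geometry with the finiteness Theorem \ref{th:fin_many_iso_classes_K3}: once infinitely many pairwise derived-equivalent twisted K3 surfaces with varying underlying surfaces are produced, the pigeonhole comparison against finite L-equivalence classes forces non-L-equivalence for infinitely many of them. I expect the hardest and least formal part to be the explicit realization of the twisted equivalence with a prescribed Brauer class together with the verification that the resulting underlying surfaces are genuinely non-isomorphic and not L-equivalent; the rest is a direct application of the machinery already established in this section, and indeed this is the construction suggested to the author by the referee (as acknowledged), so the detailed verification follows \cite{HS} and the twisted Torelli theorem.
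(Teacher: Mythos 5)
There is a genuine gap at the heart of your counting argument. You assert that, for a fixed twisted K3 surface $(X,\alpha)$, one can produce \emph{infinitely many} pairwise non-isomorphic K3 surfaces $X'$ with $D^b(X',1)\cong D^b(X,\alpha)$, and then apply Theorem \ref{th:fin_many_iso_classes_K3} to conclude that some $X'$ is not L-equivalent to $X$. But the set of untwisted Fourier--Mukai partners of a fixed twisted K3 surface is finite (any such $X'$ has $\widetilde H(X',\Z)$ Hodge isometric to the twisted Mukai lattice of $(X,\alpha)$, and the usual lattice-theoretic counting bounds the number of such $X'$ up to isomorphism), so the infinitude you need never materializes. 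The fallback you gesture at --- that isogenous Kummer surfaces are \emph{directly} twisted-derived-equivalent --- is also not what Huybrechts' theorem provides: \cite[Theorem 0.1]{H} only connects isogenous K3 surfaces by a finite \emph{chain} of twisted derived equivalences $D^b(S_i,\beta_i)\simeq D^b(S_{i+1},\alpha_{i+1})$, not by a single one. The paper closes exactly this gap with a telescoping argument: take two isogenous Kummer surfaces $S$, $S'$ that are not L-equivalent (Corollary \ref{cor:counterexamples_conj_H}), connect them by such a chain, and observe that since L-equivalence is an equivalence relation, some consecutive pair $S_i$, $S_{i+1}$ in the chain must already fail to be L-equivalent --- and that pair is directly twisted-derived-equivalent.

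If you want to keep the moduli-space-of-twisted-sheaves construction rather than the chain argument, the way to make it work is the one the paper takes in Proposition \ref{prop:twisted_but_not_L}: assume $\rho(X)\geq 12$ and $\End_{\HS_{\Z,2}}(T(X))=\Z$, use \cite[Proposition 7.3]{HS} to produce $Z$ with $D^b(Z,1)\simeq D^b(X,\alpha)$ and $T(Z)\cong\ker(\alpha\colon T(X)\to\Z/n\Z)$, and then conclude non-L-equivalence \emph{directly} from Lemma \ref{lem:trans_cycles_from_L-equiv}, since a proper finite-index Hodge substructure of $T(X)$ cannot be isomorphic to $T(X)$ when $\End(T(X))=\Z$. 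No counting over an infinite family is needed, and none is available.
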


\begin{proof}By Corollary \ref{cor:counterexamples_conj_H}, there exist isogenous Kummer surfaces $S$ and $S'$ that are not L-equivalent. By \cite[Theorem 0.1]{H} there exist a finite sequence of K3 surfaces $S=S_0,S_1,\dots,S_n=S',$ and Brauer classes $\alpha\in \Br(S),$ $\alpha_1,\beta_1\in\Br(S_1),\dots,\alpha_{n-1},\beta_{n-1}\in\Br(S_{n-1}),$ $\alpha'\in\Br(S'),$ and derived equivalences $D^b(S_0,\alpha)\simeq D^b(S_1,\alpha_1),$ $D^b(S_1,\beta_1)\simeq D^b(S_2,\alpha_2),\dots,D^b(S_{n-1},\beta_{n-1})\simeq D^b(S_n,\alpha').$ Since $S$ and $S'$ are not L-equivalent, we have that for some $i\in\{0,1\dots,n-1\},$ $S_i$ and $S_{i+1}$ are not L-equivalent. This proves the assertion.\end{proof}

For completeness we also present more explicitly a large class of examples of non-L-equivalence for twisted derived equivalent K3 surfaces. Recall that for any K3 surface $X$ we have a natural isomorphism of abelian groups $\Br(X)\cong \Hom_{\Z}(T(X),\Q/\Z),$ see e.g. \cite[Section 4.1]{S}. In particular, the elements of order $n$ in $\Br(X)$ correspond to surjective homomorphisms $T(X)\to\Z/n\Z.$

\begin{lemma}\label{lem:trans_cycles_from_L-equiv}If the K3 surfaces $X$ and $X'$ are L-equivalent, then we have $[T(X)]=[T(X')]$ in $K_0^{add}(\HS_{\Z,2}).$ If moreover we have $\End_{\HS_{\Z,2}}(T(X))=\Z,$ then $T(X)\cong T(X')$ in $\HS_{\Z,2}.$\end{lemma}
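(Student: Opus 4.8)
The plan is to decompose the transcendental lattice claim into its two assertions and handle each using the machinery already built up in Section~\ref{sec:K_0_of_additive}. The key observation is that $T(X)$ is a direct summand of $H^2(X,\Z)$ as an integral Hodge structure, since $H^2(X,\Z) \cong T(X) \oplus NS(X)$ rationally and $T(X)$ is defined as the transcendental part; more precisely, the functor sending a Hodge structure to its weight-$2$ piece together with the splitting off of the algebraic (Tate type) summands should give a well-defined additive retraction. First I would establish that L-equivalence of $X$ and $X'$ forces $[H^2(X,\Z)] = [H^2(X',\Z)]$ in $K_0^{add}(\HS_{\Z,2})$, which follows immediately by applying the homomorphism \eqref{eq:Hdg_Z_on_loc} composed with the projection $K_0^{add}(\HS_{\Z}) \to K_0^{add}(\HS_{\Z,2})$, exactly as in the proof of Theorem~\ref{th:L-equivalent_abelian}.

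The main step is then to pass from the equality of classes of the full $H^2$ to the equality of classes of the transcendental parts $T(X)$. The plan is to produce an additive functor (or at least an additive-group homomorphism on $K_0^{add}$) that isolates $T(X)$ from $H^2(X,\Z)$. Concretely, I would argue that within $\HS_{\Z,2}$ the Hodge structures of $K3$ type split canonically as transcendental part plus a sum of Tate twists $\Z(-1)$ (the classes generated by algebraic cycles, which are of Hodge type $(1,1)$), and that this splitting is functorial enough to descend to a well-defined splitting of the Grothendieck group. Applying this splitting to the equality $[H^2(X,\Z)] = [H^2(X',\Z)]$ and comparing the non-Tate components yields $[T(X)] = [T(X')]$ in $K_0^{add}(\HS_{\Z,2})$, which is the first assertion.

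For the second assertion, I would invoke Theorem~\ref{th:equals_classes_means_iso_additive} directly. Taking $\cA = \HS_{\Z,2}$ (which is a small additive category with finitely generated morphism groups, since Hodge structures on finite-rank lattices have finitely generated morphism groups), the hypothesis $\End_{\HS_{\Z,2}}(T(X)) = \Z$ means precisely that the map $K_+(\End(T(X))) \to K_0'(\End(T(X)))$ is injective, as in the parenthetical example $\End(X) \cong \Z$ in that theorem. Combined with the already-established equality $[T(X)] = [T(X')]$, Theorem~\ref{th:equals_classes_means_iso_additive} yields the isomorphism $T(X) \cong T(X')$ in $\HS_{\Z,2}$.

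The hard part will be justifying the functoriality of the transcendental-algebraic splitting at the level of the Grothendieck group. The subtlety is that the decomposition $H^2 \cong T \oplus NS$ holds only rationally in general, and the integral class $[T(X)]$ must be extracted from $[H^2(X,\Z)]$ in a way that is canonical and independent of choices, since $K_0^{add}$ remembers only stable isomorphism classes. I expect this to require either a clean description of $T(X)$ as the minimal Hodge substructure whose complexification contains $H^{2,0}$, together with an argument that the Tate-type complement contributes a multiple of $[\Z(-1)]$ that can be cancelled; or alternatively a direct argument that any additive endofunctor extracting the "transcendental kernel" is well-defined on isomorphism classes. Once this splitting is in hand, the rest of the proof is a formal application of the two cited results and should present no difficulty.
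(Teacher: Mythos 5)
Your overall architecture matches the paper's: apply $\Hdg_{\Z}$ followed by the projection to weight $2$ to get $[H^2(X,\Z)]=[H^2(X',\Z)]$ in $K_0^{add}(\HS_{\Z,2})$, extract the transcendental parts, and finish with Theorem \ref{th:equals_classes_means_iso_additive}. The first and last steps are applied exactly as in the paper. The problem is the middle step, which you correctly identify as the crux but whose primary proposed resolution does not work. The claim that K3-type integral Hodge structures ``split canonically as transcendental part plus a sum of Tate twists,'' followed by cancelling the Tate components, fails integrally: $T(X)\oplus NS(X)$ is only a finite-index sublattice of $H^2(X,\Z)$ (the paper even names this index $D$ just before Proposition \ref{prop:fin_many_for_T_X}), so there is no such direct-sum decomposition in $\HS_{\Z,2}$, and no amount of care about ``comparing non-Tate components'' will manufacture one.

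The alternative you mention only in passing is the paper's actual argument, and it needs no splitting at all: it suffices that $H\mapsto T(H)$ is an \emph{additive functor} on $\HS_{\Z}$, since any additive functor carries a stable isomorphism $H\oplus K\cong H'\oplus K$ to $T(H)\oplus T(K)\cong T(H')\oplus T(K)$ and hence induces a homomorphism on $K_0^{add}$. This is the part you call hard, but it is routine. Define $T(H)$ as the minimal saturated sub-Hodge structure of $H$ whose complexification contains $\bigoplus_{p>q}H^{p,q}$ (for weight $2$, contains $H^{2,0}$). Given $f:H\to H'$, the preimage $f^{-1}(T(H'))$ is a saturated sub-Hodge structure of $H$ whose complexification contains $H^{2,0}$, because $f_{\C}(H^{2,0})\subseteq (H')^{2,0}\subseteq T(H')_{\C}$; minimality gives $T(H)\subseteq f^{-1}(T(H'))$, i.e.\ $f(T(H))\subseteq T(H')$, so $T$ is a functor. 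Additivity $T(H_1\oplus H_2)=T(H_1)\oplus T(H_2)$ then follows from minimality together with functoriality applied to the two inclusions $H_i\hto H_1\oplus H_2$. With this observation substituted for your splitting argument, your proof closes up and coincides with the paper's.
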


\begin{proof}The first statement follows from the equality $[H^2(X,\Z)]=[H^2(X',\Z)],$ and from the existence of an additive endofunctor $T:\HS_{\Z}\to\HS_{\Z}$ which assigns to each polarizable pure integral Hodge structure its transcendental Hodge substructure. The second statement follows from Theorem \ref{th:equals_classes_means_iso_additive}.\end{proof}

\begin{prop}\label{prop:twisted_but_not_L}Let $X$ be a K3 surface such that $\rho(X)\geq 12$ and $\End_{\HS_{\Z,2}}(T(X))=\Z.$ Take any non-trivial Brauer class $\alpha\in \Br(X).$ Then there exists a K3 surface $Z$ such that $D^b(Z,1)\simeq D^b(X,\alpha),$ but $X$ and $Z$ are not L-equivalent.\end{prop}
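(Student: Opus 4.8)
The plan is to exhibit a twisted derived partner $(Z,1)$ of $(X,\alpha)$ whose transcendental lattice is distinguishable from that of $X$. First I would invoke the structure theory of twisted derived equivalences for K3 surfaces: by work of C\u{a}ld\u{a}raru, Huybrechts--Stellari and Yoshioka, if $(Z,1)$ is derived equivalent to $(X,\alpha)$ where $\alpha$ has order $n$, then $Z$ is a fine moduli space of $\alpha$-twisted sheaves on $X$, and on the level of transcendental Hodge structures one has $T(Z)\cong \ker(\alpha)\subset T(X)$, the index-$n$ sublattice given by the kernel of the surjection $\alpha\colon T(X)\to\Z/n\Z$ (under the isomorphism $\Br(X)\cong\Hom_{\Z}(T(X),\Q/\Z)$ recalled above). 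The existence of such a fine moduli space $Z$ is where the Picard-number hypothesis $\rho(X)\geq 12$ enters: one needs an isotropic Mukai vector realizing the twisted moduli problem as an honest (untwisted) K3 surface, and a sufficiently large Picard number guarantees the lattice-theoretic room to produce such a vector. So the first step is purely to \emph{produce} $Z$ with $D^b(Z,1)\simeq D^b(X,\alpha)$ and $T(Z)\cong\ker(\alpha)$.

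Next I would argue that $X$ and $Z$ cannot be L-equivalent. The tool is Lemma \ref{lem:trans_cycles_from_L-equiv}: since $\End_{\HS_{\Z,2}}(T(X))=\Z$ by hypothesis, if $X$ and $Z$ were L-equivalent then we would have an \emph{isomorphism} of integral Hodge structures $T(X)\cong T(Z)$. Thus it suffices to show $T(X)\not\cong\ker(\alpha)$ as abstract integral Hodge structures (forgetting the polarization, as in Proposition \ref{prop:fin_many_for_T_X}). The cleanest invariant separating them is the discriminant: the intersection form on $T(X)$ has some determinant $D$, while the restriction to the finite-index sublattice $\ker(\alpha)$ of index $n$ has determinant $n^2 D$. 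Since $\End_{\HS_{\Z,2}}(T(X))=\Z$, any Hodge isomorphism $T(X)\to\ker(\alpha)$ would, after composing with the inclusion $\ker(\alpha)\hookrightarrow T(X)$, give a Hodge endomorphism of $T(X)$, hence multiplication by some integer $m$; comparing determinants forces $m^2=n^2$, but a Hodge \emph{iso}morphism onto a strict sublattice is impossible since multiplication by $m=\pm n$ has image of index $|m|^{\dim T(X)}\neq n$ unless $\dim T(X)=1$, which never happens for a transcendental lattice. This rank--discriminant bookkeeping rules out the isomorphism.

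The main obstacle I expect is the first step — rigorously guaranteeing the \emph{existence} of an untwisted K3 surface $Z$ (a \emph{fine} twisted moduli space, so that the universal twisted sheaf produces an honest derived equivalence $D^b(Z,1)\simeq D^b(X,\alpha)$) together with the precise identification $T(Z)\cong\ker(\alpha)$. This requires choosing a twisted Mukai vector $v$ with $v^2=0$ and carefully checking genericity/fineness conditions; the hypothesis $\rho(X)\geq 12$ is exactly what makes the relevant lattice embeddings and isotropic-vector arguments go through (via Nikulin's existence and uniqueness theorems for lattice embeddings into the K3 lattice). The Hodge-theoretic conclusion, once $Z$ is in hand, is comparatively soft. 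A secondary technical point is that Lemma \ref{lem:trans_cycles_from_L-equiv} detects L-equivalence only up to Hodge \emph{isomorphism} rather than isometry, so I must make sure the discriminant obstruction above is genuinely an obstruction to an unpolarized Hodge isomorphism — which the endomorphism-ring argument secures.
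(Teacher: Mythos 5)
Your proposal follows the paper's proof essentially verbatim: the existence of $Z$ with $D^b(Z,1)\simeq D^b(X,\alpha)$ and $T(Z)\cong\ker(\alpha\colon T(X)\to\Z/n\Z)$ is exactly what the paper quotes from Huybrechts--Stellari (\cite[Proposition 7.3]{HS} together with \cite[Proposition 4.7]{H2}), and non-L-equivalence is then deduced from Lemma \ref{lem:trans_cycles_from_L-equiv} and the hypothesis $\End_{\HS_{\Z,2}}(T(X))=\Z$, just as you do. One correction to your final bookkeeping: a Hodge isomorphism $T(X)\to\ker(\alpha)$ composed with the inclusion is multiplication by some integer $m$, so $\ker(\alpha)=mT(X)$; comparing indices (or determinants) gives $|m|^{r}=n$ with $r=\rk_{\Z}T(X)$, not $m^{2}=n^{2}$, so your index count does not by itself exclude the case where $n$ is a perfect $r$-th power. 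The contradiction there is that $T(X)/mT(X)\cong(\Z/|m|\Z)^{r}$ is non-cyclic for $|m|>1$ and $r\geq 2$, whereas $T(X)/\ker(\alpha)\cong\Z/n\Z$ is cyclic of order $n>1$; this is precisely the role of the paper's parenthetical remark that $\rk_{\Z}T(X)\geq 2$.
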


\begin{proof}Existence of $Z$ with required derived equivalence follows from \cite[Proposition 7.3]{HS}. By the construction in loc. cit. and by \cite[Proposition 4.7]{H2}, we have an isomorphism of integral Hodge structures $T(Z)\cong T(X,\alpha),$ where $T(X,\alpha)=\ker(\alpha:T(X)\to\Z/n\Z),$ and $n>1$ is the order of $\alpha.$ By our assumption on the endomorphisms of $T(X)$ (and since $\rk_{\Z}T(X)\geq 2$), the integral Hodge structures $T(X)$ and $T(Z)$ are non-isomorphic. Hence, by Lemma \ref{lem:trans_cycles_from_L-equiv} $X$ and $Z$ are not L-equivalent.\end{proof}

\begin{remark}\label{remark:further_loc}We note that if one considers further localization of $K_0(\Var_{\C})[\L^{-1}]$ with respect to some set $W\subset \Z[\L]$ of monic polynomials, then equality $[X]=[Y]$ in $K_0(\Var_{\C})[\L^{-1}][W^{-1}]$ would also imply $\Hdg_{\Z}([X])=\Hdg_{\Z}([Y]).$ This is because there are no non-zero $\Hdg_{\Z}(W)$-torsion elements in $K_0^{add}(\HS_{\Z}).$ In particular, all our statements concerning (non-)L-equivalence are also valid in such further localizations.\end{remark}

\end{document}